\title[Bounds for the volumes]{Optimal bounds for the volumes of 
K\"ahler-Einstein Fano manifolds}
\author{Kento Fujita} 
\date{\today}
\subjclass[2010]{Primary 14J45; Secondary 14L24}
\keywords{Fano varieties, K-stability, K\"ahler-Einstein metrics}
\address{Department of Mathematics, Faculty of Science, 
Kyoto University, Kyoto 606-8502, Japan}
\email{fujita@math.kyoto-u.ac.jp}
\newcommand{\pr}{\mathbb{P}}
\newcommand{\Z}{\mathbb{Z}}
\newcommand{\Q}{\mathbb{Q}}
\newcommand{\R}{\mathbb{R}}
\newcommand{\C}{\mathbb{C}}
\newcommand{\A}{\mathbb{A}}
\newcommand{\G}{\mathbb{G}}
\newcommand{\E}{\mathbb{E}}
\newcommand{\ND}{\operatorname{N}^1}
\newcommand{\Supp}{\operatorname{Supp}}
\newcommand{\Exc}{\operatorname{Exc}}
\newcommand{\Proj}{\operatorname{Proj}}
\newcommand{\lct}{\operatorname{lct}}
\newcommand{\DF}{\operatorname{DF}}
\newcommand{\Ding}{\operatorname{Ding}}
\newcommand{\ord}{\operatorname{ord}}
\newcommand{\vol}{\operatorname{vol}}
\newcommand{\Image}{\operatorname{Image}}
\newcommand{\SM}{\operatorname{sm}}
\newcommand{\sI}{\mathcal{I}}
\newcommand{\sJ}{\mathcal{J}}
\newcommand{\sO}{\mathcal{O}}
\newcommand{\sX}{\mathcal{X}}
\newcommand{\sY}{\mathcal{Y}}
\newcommand{\sL}{\mathcal{L}}
\newcommand{\sM}{\mathcal{M}}
\newcommand{\sF}{\mathcal{F}}
\newcommand{\sG}{\mathcal{G}}
\newcommand{\da}{\mathfrak{a}}
\newcommand{\db}{\mathfrak{b}}
\newtheorem{thm}{Theorem}[section]
\newtheorem{lemma}[thm]{Lemma}
\newtheorem{proposition}[thm]{Proposition}
\newtheorem{corollary}[thm]{Corollary}
\newtheorem{claim}[thm]{Claim}
\theoremstyle{definition}
\newtheorem{definition}[thm]{Definition}
\newtheorem{remark}[thm]{Remark}
\newtheorem*{ack}{Acknowledgments}
\begin{document}

\maketitle 

\begin{abstract}
We show that any $n$-dimensional Fano manifold $X$ admitting K\"ahler-Einstein 
metrics satisfies that the anti-canonical volume 
is less than or equal 
to the value $(n+1)^n$. Moreover, the equality holds if and only if $X$ is isomorphic to 
the $n$-dimensional projective space. 
\end{abstract}

\setcounter{tocdepth}{1}
\tableofcontents

\section{Introduction}\label{intro_section}

An $n$-dimensional smooth complex projective variety $X$ is said to be a 
\emph{Fano manifold} if the anti-canonical divisor $-K_X$ is ample. 
If $n\leq 3$, then the anti-canonical volume $((-K_X)^{\cdot n})$ is 
less than or equal to $(n+1)^n$, and the equality holds if and only if $X$ is 
isomorphic to the projective space $\pr^n$ by \cite{isk, MoMu}. 
However, if $n\geq 4$, there exists an $n$-dimensional Fano manifold $X$ 
such that $((-K_X)^{\cdot n})>(n+1)^n$ holds (see \cite[p.\ 128]{IP} for example). 
Recently, Berman and Berndtsson \cite{BB1} conjectured that, if $X$ admits 
\emph{K\"ahler-Einstein metrics}, then the value $((-K_X)^{\cdot n})$ would be 
less than or equal to $(n+1)^n$. In fact, if $X$ is toric, then the conjecture is true 
by \cite[Theorem 1]{BB1} and \cite[Proposition 1.3]{NP}. 
Moreover, Berman and Berndtsson \cite{BB2} proved the above conjecture under the 
assumption that $X$ admits a $\G_m$-action with finite number of fixed points. 

The purpose of this article is to refine the result \cite{BB2} in full generality. 
The following is the main result in this article.

\begin{thm}[Main Theorem]\label{mainthm}
Let $X$ be an $n$-dimensional Fano manifold admitting K\"ahler-Einstein metrics. 
If $((-K_X)^{\cdot n})\geq (n+1)^n$, then $X\simeq\pr^n$.  
\end{thm}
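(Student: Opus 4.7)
The plan is to combine the valuative criterion for K-stability with an elementary dimension count for sections of $-mK_X$. By the solution of the Yau--Tian--Donaldson conjecture for Fano manifolds (work of Berman, Chen--Donaldson--Sun and Tian), existence of a K\"ahler--Einstein metric on $X$ implies that $X$ is K-polystable, and in particular K-semistable. By the valuative criterion (due to the author, and independently to C.~Li), this is equivalent to
\begin{equation*}
A_X(F)\cdot(-K_X)^{\cdot n}\;\geq\;\int_0^{\infty}\vol\bigl(\sigma^*(-K_X)-tF\bigr)\,dt
\end{equation*}
for every prime divisor $F$ over $X$. I would apply this with $F=E$, the exceptional divisor of the blow-up $\sigma\colon\tilde X\to X$ at an arbitrary smooth point $p\in X$, so that $A_X(E)=n$.

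The key input would be the pointwise lower bound
\begin{equation*}
\vol\bigl(\sigma^*(-K_X)-tE\bigr)\;\geq\;V-t^n\qquad\text{for all }t\in[0,V^{1/n}],
\end{equation*}
where $V:=(-K_X)^{\cdot n}$, which I would prove by a jet-count argument. Fix $m$ large and divisible so that $mt\in\Z_{>0}$. Asymptotic Riemann--Roch gives $h^0(X,-mK_X)=Vm^n/n!+O(m^{n-1})$, while $\dim\bigl(\sO_{X,p}/\mathfrak m_p^{mt}\bigr)=\binom{n+mt-1}{n}=t^nm^n/n!+O(m^{n-1})$. The kernel of the evaluation map $H^0(X,-mK_X)\to\sO_{X,p}/\mathfrak m_p^{mt}$ is therefore at least $(V-t^n)m^n/n!+O(m^{n-1})$ dimensional, and it is identified with $H^0(\tilde X,\sigma^*(-mK_X)-mtE)$; dividing by $m^n/n!$ and letting $m\to\infty$ yields the bound.

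Integrating and inserting into the valuative inequality gives
\begin{equation*}
nV\;\geq\;\int_0^{\infty}\vol\bigl(\sigma^*(-K_X)-tE\bigr)\,dt\;\geq\;\int_0^{V^{1/n}}(V-t^n)\,dt\;=\;\frac{n}{n+1}\,V^{(n+1)/n},
\end{equation*}
which rearranges to $V\leq(n+1)^n$. Under the hypothesis $V\geq(n+1)^n$, all three relations must collapse to equalities: $\vol(\sigma^*(-K_X)-tE)=V-t^n$ on $[0,n+1]$ for every smooth point $p$. Since the top self-intersection $(\sigma^*(-K_X)-tE)^{\cdot n}$ equals $V-t^n$ by a direct computation using $E^{\cdot n}=(-1)^{n-1}$, the volume coincides with the top self-intersection on the full pseudo-effective range, which forces the class $\sigma^*(-K_X)-tE$ to be nef throughout $[0,n+1]$. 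Hence the Seshadri constant of $-K_X$ at every $p\in X$ equals $n+1$, and $(-K_X)\cdot C\geq(n+1)\cdot\operatorname{mult}_p C\geq n+1$ for every rational curve $C\ni p$.

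The proof would conclude by invoking the Cho--Miyaoka--Shepherd-Barron characterization of $\pr^n$: an $n$-dimensional Fano manifold whose anti-canonical length is at least $n+1$ is isomorphic to $\pr^n$. The main obstacle I foresee is the equality analysis in the third paragraph: justifying rigorously that the volume equality forces nefness of $\sigma^*(-K_X)-tE$ in higher dimensions likely requires invoking Boucksom's divisorial Zariski decomposition, together with the observation that saturation of the Seshadri constant at $V^{1/n}$ is very restrictive.
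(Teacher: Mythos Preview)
Your proposal is correct and follows essentially the same architecture as the paper: blow up at a smooth point, apply the $\beta$-inequality to the exceptional divisor, bound the volume integral from below by $\int_0^{V^{1/n}}(V-t^n)\,dt$, and in the equality case extract $\varepsilon_p(-K_X)=n+1$ and invoke Cho--Miyaoka--Shepherd-Barron. Two points of comparison are worth recording.

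First, on the source of the $\beta$-inequality: you invoke the K-semistability valuative criterion as a black box, whereas the paper works directly from Berman's result that K\"ahler--Einstein implies \emph{Ding} semistability, and then builds explicit flag-ideal semi test configurations from the filtration $\sF^xV_r=H^0(X,L^{\otimes r}\cdot I_Z^{\lceil x\rceil})$ to derive $\beta(Z)\geq 0$ (Theorem~\ref{ding_thm}). Chronologically the paper's Theorem~\ref{ding_thm} is a precursor to the full Fujita--Li criterion, so your shortcut is legitimate today but hides the work the paper actually carries out.

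Second, on the equality case --- the step you flag as the main obstacle --- the paper does \emph{not} go through divisorial Zariski decomposition. Instead it proves a self-contained characterization (Theorem~\ref{sesh_thm}\eqref{sesh_thm2}): the Seshadri constant $\varepsilon_p(L)$ equals the largest $a$ such that $\vol_{\hat X}(\sigma^*L-xF)=(L^{\cdot n})-x^n$ for all $x\in[0,a]$. The proof exploits the fact that for the blow-up at a smooth point one controls $h^i(jF,\sigma^*\sO_X(kL)|_{jF})$ explicitly, so the volume equality forces the asymptotic higher cohomology $\widehat h^i(\sigma^*L-x_tF)$ to vanish; then the ampleness criterion of de~Fernex--K\"uronya--Lazarsfeld gives nefness (indeed ampleness after a perturbation by a known ample class). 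This is cleaner and more direct than the route you sketch, and it sidesteps the genuine difficulty that ``$\vol(D)=D^{\cdot n}$ implies $D$ nef'' is not a general fact in higher dimensions.
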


The strategy to prove Theorem \ref{mainthm} is algebraic and is completely 
different from the argument in \cite{BB2}. 
For a Fano manifold $X$, recall that, $X$ admits 
K\"ahler-Einstein metrics if and only if the pair $(X, -K_X)$ is \emph{K-polystable}
(see \cite{tian1, don05, CT, stoppa, mab1, mab2, B, CDS1, CDS2, CDS3, tian2}). 
In \cite{B}, Berman proved the ``only if" direction by viewing 
the slope of the Ding functional (see \cite{din}) 
along a geodesic ray in the space of K\"ahler potentials. 
Berman also treated the case that 
$X$ is a \emph{$\Q$-Fano variety}, that is, a complex projective variety which is 
log terminal and $-K_X$ is an ample $\Q$-Cartier divisor. 
In this article, we heavily use Berman's results \cite{B}. 
In Section \ref{ding_section} of this article, 
we introduce the notions of \emph{Ding polystability} and \emph{Ding semistability}. 
These notions are nothing but interpretations of 
Berman's formula for the slope of the Ding functional. 
The result in \cite[\S 3]{B} shows that, if a $\Q$-Fano variety $X$ admits 
K\"ahler-Einstein metrics, then $X$ is Ding polystable (and also Ding semistable, 
see Theorem \ref{berman_thm}). 
A $\Q$-Fano variety $X$ is said to be Ding semistable if the \emph{Ding invariant} 
$\Ding(\sX, \sL)$ satisfies that $\Ding(\sX, \sL)\geq 0$ for any 
normal test configuration $(\sX, \sL)/\A^1$ of $(X, -rK_X)$ (see Section 
\ref{ding_section} in detail). The key idea for the proof of Theorem \ref{mainthm} 
is constructing specific 
test configurations of $(X, -rK_X)$ from any nonzero proper closed subscheme 
$Z\subset X$ and calculating those Ding invariants and taking the limit. 
The construction of test configurations is similar to the construction in \cite{fjt1, fjt2}. 
We consider a \emph{sequence} of test configurations. 
The following is one of the main consequence of the key idea. 

\begin{thm}[{=Theorem \ref{ding_thm}}]\label{ding_intro_thm}
Let $X$ be a $\Q$-Fano variety. Assume that $X$ is Ding semistable. 
Take any nonempty proper closed subscheme $\emptyset\neq Z\subsetneq X$ 
corresponds to an ideal sheaf $0\neq I_Z\subsetneq\sO_X$. 
Let $\sigma\colon\hat{X}\to X$ be the blowup along $Z$, let $F\subset\hat{X}$ be 
the Cartier divisor defined by the equation $\sO_{\hat{X}}(-F)=I_Z\cdot\sO_{\hat{X}}$. 
Then we have $\beta(Z)\geq 0$, where 
\[
\beta(Z):=\lct(X;I_Z)\cdot\vol_X(-K_X)-
\int_0^\infty\vol_{\hat{X}}\left(\sigma^*(-K_X)-xF\right)dx.
\]
Note that, $\vol$ is the volume function $($see Definition \ref{volume_dfn}$)$, 
and $\lct(X; I_Z)$ is the log canonical threshold of $I_Z$ with respects to 
$X$ $($see Definition \ref{lct_dfn}$)$. 
\end{thm}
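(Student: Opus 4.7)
The plan is to derive $\beta(Z) \geq 0$ by applying Ding semistability to a family of normal test configurations $(\sY, \sL_c)$ of $(X, -rK_X)$ constructed from the blowup $\sigma\colon \hat X \to X$, computing their Ding invariants in closed form, and passing to a suitable limit in the rational parameter $c$. First I would set up the family: fix a positive integer $r$ such that $-rK_X$ is Cartier, and let $\Pi\colon \sY \to X \times \A^1$ be the blowup of the ideal $I_Z \cdot \sO_{X \times \A^1} + (t)\sO_{X \times \A^1}$ (i.e., of $Z \times \{0\}$), with exceptional Cartier divisor $\sE$. Composition with the second projection yields a flat $\G_m$-equivariant family $\pi\colon \sY \to \A^1$ with general fiber $X$; its central fiber decomposes as $\sY_0 = \tilde X + \sE$, where the strict transform $\tilde X$ is canonically identified with $\hat X$ via $\Pi|_{\tilde X} = \sigma$, and under this identification $\sE|_{\tilde X} = F$. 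For rational $c > 0$ with $cr \in \Z$, set $\sL_c := \Pi^*(-r p_1^* K_X) - cr \sE$; this is relatively semi-ample over $\A^1$ for $c$ in a bounded range, giving a normal semi-ample test configuration of $(X, -rK_X)$.

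Next I would compute $\Ding(\sY, \sL_c)$ using the formulas of Section~\ref{ding_section}. The Ding invariant splits into an intersection-theoretic energy term and a log-canonical-threshold term. For the energy, I would evaluate $(\bar{\sL}_c^{n+1})$ via the projection formula on $\hat X$: using $\sE|_{\tilde X} = F$ together with the derivative identity $\frac{d}{dc}(\sL_c^{n+1})/(n+1) = -r(\sL_c^n \cdot \sE)$, one obtains after integration an expression of the shape $c \cdot \vol_X(-K_X) - \int_0^c \vol_{\hat X}(\sigma^*(-K_X) - xF)\, dx$ (the initial value at $c = 0$ vanishing, and the $c$-derivative matching $\vol_X(-K_X) - \vol_{\hat X}(\sigma^*(-K_X) - cF)$ by the standard blowup derivative-of-volume formula). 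For the log-canonical-threshold piece, a log-resolution analysis of the pair $(\sY, c\sE)$ along $\tilde X \cap \sE$ shows that the Ding lct contribution equals $\min(c, \lct(X; I_Z))$: when $c \leq \lct(X; I_Z)$ no correction is needed, and beyond that threshold the contribution saturates. Assembling these, Ding semistability $\Ding(\sY, \sL_c) \geq 0$ rearranges to
$$\min(c, \lct(X; I_Z)) \cdot \vol_X(-K_X) \geq \int_0^c \vol_{\hat X}(\sigma^*(-K_X) - xF)\, dx$$
for every $c$ where the construction produces a valid semi-ample test configuration.

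Finally, to reach the full integral on $[0,\infty)$ and conclude, I would extend the construction beyond the Seshadri-type range of $c$ by an iterated blowup construction (equivalently, using flag ideals of the form $\sum_j I_Z^j (t)^{N-j}$ for large $N$, in the spirit of \cite{fjt1, fjt2}), producing a sequence of test configurations whose Ding invariants approximate the displayed inequality for larger and larger $c$. By continuity of the volume function and its compact support (bounded by the pseudo-effective threshold of $F$ with respect to $\sigma^*(-K_X)$), passing to the limit as $c \to \infty$ yields $\lct(X; I_Z) \cdot \vol_X(-K_X) \geq \int_0^\infty \vol_{\hat X}(\sigma^*(-K_X) - xF)\, dx$, i.e., $\beta(Z) \geq 0$.

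\textbf{Main obstacle.} The principal difficulty is extending past the semi-ample range: once $c$ exceeds the Seshadri-type threshold, $\sL_c$ is no longer semi-ample, so one must iterate the blowup (or pass to a filtration-based generalization of test configurations) and bound the error in the Ding invariant approximation. A secondary technical challenge is the log-resolution-based identification of the lct contribution with $\lct(X; I_Z)$, particularly when $Z$ is a non-reduced or singular subscheme of a singular $\Q$-Fano $X$, where one must carefully compare log discrepancies of $\sE$ with those of $\sigma$.
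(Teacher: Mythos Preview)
Your overall strategy---build test configurations from $Z$, evaluate Ding invariants, pass to a limit---matches the paper's, but the execution diverges precisely at the obstacle you flag, and your proposed fix does not work as written.

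For the deformation to the normal cone $(\sY,\sL_c)$, the derivative of the energy term gives $(\sL_c^{\cdot n}\cdot\sE)=r^n\bigl(((-K_X)^{\cdot n})-((\sigma^*(-K_X)-cF)^{\cdot n})\bigr)$, an intersection number on $\hat X$, not a volume; the two agree only for $c\le\varepsilon_Z$, so your ``derivative-of-volume'' identification is unjustified past that point. More seriously, your flag ideal $\sum_j I_Z^j(t)^{N-j}$ equals $(I_Z+(t))^N$, and blowing up a power of an ideal yields the same variety as blowing up the ideal itself. Hence your ``iterated'' construction reproduces the same $\sY$ with exceptional divisor $N\sE$ and polarization $\sL_{N/r}$; the semi-ample range is unchanged, and for $N/r>\varepsilon_Z$ you still have no valid test configuration to feed into Ding semistability.

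The paper's resolution is different in kind. It replaces $I_Z^j$ by the base ideals $I_{(r,j)}=\Image\bigl(H^0(X,L^{\otimes r}\cdot I_Z^j)\otimes L^{\otimes(-r)}\to\sO_X\bigr)$ of the corresponding linear systems. The resulting flag ideal $\sI_r$ yields a semi test configuration that is semi-ample for \emph{every} $r$ (Lemma~\ref{stc_lemma}, exactly because the polarization is globally generated by construction). The limit of the energy terms as $r\to\infty$ is then computed via filtered-linear-series volume asymptotics of Boucksom--Chen and Lazarsfeld--Musta\c{t}\u{a} (Lemma~\ref{limit_lem}); this is where $\vol_{\hat X}(\sigma^*(-K_X)-xF)$ genuinely enters, as $\vol(\sF V_\bullet^x)$ for the filtration $\sF^xV_r=H^0(X,L^{\otimes r}\cdot I_Z^{\lceil x\rceil})$. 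Your parenthetical ``filtration-based generalization'' is thus not an alternative route but the entire engine of the argument. The lct step is also handled differently: rather than a log-resolution comparison on $\sY$, the paper passes through graded ideal families (Lemma~\ref{gr_lc_lem}) to reduce to sub log canonicity of $(X\times\A^1,(I_Z+(t))^{\cdot\tau}\cdot(t)^{\cdot d_\infty})$, and then applies Takagi's multiplier-ideal summation formula on $X\times\A^1$ to extract $\lct(X;I_Z)\ge S$.
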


More generally, we construct a sequence of test configurations from 
\emph{filtered linear series} in Section \ref{tc_filt_section}. 
From Theorem \ref{ding_intro_thm}, we can immediately show the following corollary. 

\begin{corollary}[{see Theorem \ref{Pn_thm}}]\label{intro_cor}
Let $X$ be an $n$-dimensional $\Q$-Fano variety. Assume that 
$X$ is Ding semistable. Then we have 
$((-K_X)^{\cdot n})\leq (n+1)^n$. 
\end{corollary}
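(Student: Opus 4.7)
The plan is to apply Theorem \ref{ding_intro_thm} with $Z$ chosen to be a single smooth point of $X$. Since $X$ is $\Q$-Fano it is klt, hence normal, so the smooth locus $X^{\SM}$ is open and dense; one picks $p \in X^{\SM}$ and takes $I_Z = \mathfrak{m}_p$. The blow up $\sigma\colon\hat X\to X$ at $p$ has exceptional divisor $F = E \simeq \pr^{n-1}$.

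Writing $V := ((-K_X)^{\cdot n})$, Theorem \ref{ding_intro_thm} will give
\[
\lct(X;\mathfrak{m}_p)\cdot V \;\geq\; \int_0^\infty \vol_{\hat X}\bigl(\sigma^*(-K_X) - xE\bigr)\, dx.
\]
A short discrepancy computation using $K_{\hat X/X} = (n-1)E$ and $\sigma^{-1}\mathfrak{m}_p\cdot\sO_{\hat X} = \sO_{\hat X}(-E)$ yields $\lct(X;\mathfrak{m}_p) = n$, so the inequality reads $nV \geq \int_0^\infty \vol_{\hat X}(\sigma^*(-K_X) - xE)\,dx$.

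The main step will be to establish the volume lower bound
\[
\vol_{\hat X}\bigl(\sigma^*(-K_X) - xE\bigr) \;\geq\; V - x^n \qquad \text{for every } x \in [0, V^{1/n}].
\]
To prove this, one fixes $m$ divisible enough that $-mK_X$ is Cartier. Since $\sigma_*\sO_{\hat X}(-kE) = \mathfrak{m}_p^k$, the projection formula gives
\[
h^0\bigl(\hat X, \sigma^*(-mK_X) - kE\bigr) \;=\; h^0\bigl(X, \sO_X(-mK_X)\otimes\mathfrak{m}_p^k\bigr),
\]
and because $p$ is a smooth point on an $n$-fold, evaluating $(k-1)$-jets at $p$ yields
\[
h^0\bigl(X, \sO_X(-mK_X)\otimes\mathfrak{m}_p^k\bigr) \;\geq\; h^0\bigl(X, \sO_X(-mK_X)\bigr) - \binom{k+n-1}{n}.
\]
Setting $k = \lceil mx\rceil$ and invoking asymptotic Riemann-Roch for $h^0(-mK_X)$, both terms on the right have leading order $m^n/n!$ with coefficients $V$ and $x^n$ respectively, which produces the claimed bound on $\vol$ as $m\to\infty$.

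Integrating this bound and feeding it into the inequality from Theorem \ref{ding_intro_thm} gives
\[
nV \;\geq\; \int_0^{V^{1/n}} (V - x^n)\, dx \;=\; \frac{n}{n+1}\, V^{(n+1)/n},
\]
which rearranges at once to $V^{1/n} \leq n+1$, that is, $V \leq (n+1)^n$. The only non-routine ingredient will be the cohomological lower bound on the volume, which relies essentially on $p$ being a smooth point; once that is in hand the remainder is arithmetic. The argument is sharp: for $X = \pr^n$ the divisor $\sigma^*(-K_{\pr^n}) - xE$ is nef on the entire interval $[0, n+1] = [0, V^{1/n}]$, so equalities hold throughout and one recovers $V = (n+1)^n$.
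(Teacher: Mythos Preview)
Your proposal is correct and follows essentially the same route as the paper: apply Theorem~\ref{ding_intro_thm} at a smooth point $p$, use $\lct(X;\mathfrak{m}_p)=n$, bound $\vol_{\hat X}(\sigma^*(-K_X)-xE)\geq V-x^n$ on $[0,V^{1/n}]$ via the cohomological estimate (which the paper records separately as Theorem~\ref{sesh_thm}\,\eqref{sesh_thm1}), and integrate. Your jet-evaluation argument for the volume bound is equivalent to the paper's exact-sequence computation on the exceptional divisor.
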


Theorem \ref{mainthm} is immediately obtained by Corollary \ref{intro_cor} and 
a description of Seshadri constants (Theorem \ref{sesh_thm}), 
together with the results \cite{CMSB} and \cite{kebekus}. 
For detail, see Section \ref{proof_section}.

The article is organized as follows. 
In Section \ref{prelim_section}, we recall the notions of the volume functions, 
Seshadri constants, log canonical thresholds and K-stability. 
We characterize Seshadri constants in terms of the 
volume function in Theorem \ref{sesh_thm}. 
The theorem is important in order to characterize the projective space. 
In Section \ref{ding_section}, we recall Berman's result \cite{B}. 
We introduce the notions of Ding invariants, Ding polystability and Ding semistability. 
Section \ref{ding_subscheme_section} is the core of this article. 
In Section \ref{saturation_section}, we consider a general theory of the saturation of 
filtered linear series. In Section \ref{tc_filt_section}, we construct a sequence of 
semi test configurations from given filtered linear series. 
The construction is similar to the one in \cite{sz}. Our construction enables us to 
calculate (a kind of) the limit of those Ding invariants via the saturation of the given 
filtration. See Theorem \ref{filt_thm} in detail. 
In Section \ref{ding_sub_section}, 
motivated by the work of Ross and Thomas \cite{RT}, we consider specific test configurations obtained by the natural filtered linear series coming from fixed 
closed subschemes. By taking the limit of those Ding invariants, 
we get Theorem \ref{ding_thm}. 
In Section \ref{proof_section}, we prove Theorem \ref{mainthm}. 
This is an immediate consequence of previous sections.

\begin{ack}
The author thanks Doctor Yuji Odaka, who introduced him the importance 
of \cite[\S 3]{B} and helped him to deduce Proposition \ref{ding_flag_prop}, 
and Professor Robert Berman, who gave him comments related to \cite{B}. 
The author is partially supported by a JSPS Fellowship for Young Scientists. 
\end{ack}

Throughout this paper, we work in the category of algebraic (separated and of 
finite type) scheme over the complex number field $\C$. 
A \emph{variety} means a reduced and irreducible algebraic scheme. 
For a projective surjective morphism $\alpha\colon\sX\to C$ with $\sX$ a normal 
variety and $C$ a smooth curve, let $K_{\sX/C}:=K_{\sX}-\alpha^*K_C$ be the relative 
canonical divisor. Moreover, for a closed point $t\in C$, let $\sX_t$ be the 
scheme-theoretic fiber of $\alpha$ at $t\in C$. 
For a $\Q$-Fano variety $X$, $\omega$ is said to be a 
\emph{K\"ahler-Einstein metric} on $X$ if $\omega$ is a K\"ahler-Einstein metric 
on the smooth locus $X^{\SM}$ of $X$ and the volume of $\omega$ on $X^{\SM}$ 
coincides with the value $((-K_X)^{\cdot n})$ (see \cite{BBEGZ, B} for detail). 

For any $c\in\R$, let $\lfloor c\rfloor\in\Z$ 
be the biggest integer which is not bigger than 
$c$ and let $\lceil c\rceil\in\Z$ 
be the smallest integer which is not less than $c$.

\section{Preliminaries}\label{prelim_section}

In this section, we recall some basic definitions and see those properties. 

\subsection{The volumes of divisors}\label{volume_section}

\begin{definition}[{see \cite{L1, L2}}]\label{volume_dfn}
Let $X$ be an $n$-dimensional projective variety. 
For a Cartier divisor $L$ on $X$, we set 
\[
\vol_X(L):=\limsup_{k\to\infty}\frac{h^0(X, \sO_X(kL))}{k^n/n!}.
\]
We know that the limsup computing $\vol_X(L)$ is actually a limit 
(see \cite[Example 11.4.7]{L2}). If $L$ and $L'$ are numerically equivalent, 
then $\vol_X(L)=\vol_X(L')$ (see \cite[Proposition 2.2.41]{L1}). Moreover, 
we can extend uniquely to a continuous function
\[
\vol_X\colon \ND(X)\to\R_{\geq 0}
\]
(see \cite[Corollary 2.2.45]{L1}).
\end{definition}

\subsection{Seshadri constants, pseudo-effective thresholds}\label{sesh_section}

\begin{definition}\label{sesh_psef_dfn}
Let $X$ be a projective variety, $L$ be an ample $\Q$-divisor 
on $X$, $\emptyset\neq Z\subsetneq X$ be a nonempty proper subscheme 
corresponds to an ideal sheaf $0\neq I_Z\subsetneq\sO_X$, 
$\sigma\colon\hat{X}\to X$ be the blowup along $Z$, and $F\subset\hat{X}$ be the 
Cartier divisor defined by the equation $\sO_{\hat{X}}(-F)=I_Z\cdot\sO_{\hat{X}}$. 
\begin{enumerate}
\renewcommand{\theenumi}{\arabic{enumi}}
\renewcommand{\labelenumi}{(\theenumi)}
\item\label{sesh_psef_dfn1}
The \emph{Seshadri constant $\varepsilon_Z(L)$ of $L$ along $Z$} is defined by
\[
\varepsilon_Z(L):=\sup\{x\in\R_{>0}\,|\,\sigma^*L-xF: \text{ ample}\}.
\]
\item\label{sesh_psef_dfn2}
The \emph{pseudo-effective threshold $\tau_Z(L)$ of $L$ along $Z$} is defined by
\[
\tau_Z(L):=\sup\{x\in\R_{>0}\,|\,\sigma^*L-xF: \text{ big}\}.
\]
\end{enumerate}
If $X$ is a $\Q$-Fano variety, then we write $\varepsilon_Z:=\varepsilon_Z(-K_X)$ and 
$\tau_Z:=\tau_Z(-K_X)$ for simplicity. 
\end{definition}

\begin{thm}\label{sesh_thm}
Let $X$ be an $n$-dimensional projective variety with $n\geq 2$, $L$ be an ample 
$\Q$-divisor on $X$, $p\in X$ be a smooth closed point, $\sigma\colon\hat{X}\to X$ 
be the blowup along $p$, and $F\subset\hat{X}$ be the exceptional divisor of $\sigma$. 
\begin{enumerate}
\renewcommand{\theenumi}{\arabic{enumi}}
\renewcommand{\labelenumi}{(\theenumi)}
\item\label{sesh_thm1}
For any $x\in\R_{\geq 0}$, we have
\[
\vol_{\hat{X}}(\sigma^*L-xF)\geq((\sigma^*L-xF)^{\cdot n})=(L^{\cdot n})-x^n.
\]
\item\label{sesh_thm2}
Set $\Lambda_p(L):=\{x\in\R_{\geq 0}\,|\,\vol_{\hat{X}}(\sigma^*L-xF)
=((\sigma^*L-xF)^{\cdot n})\}$. Then we have
\[
\varepsilon_p(L)=\max\{x\in\R_{\geq 0}\,|\,y\in\Lambda_p(L) \text{ for all }y\in[0,x]\}.
\]
\end{enumerate}
\end{thm}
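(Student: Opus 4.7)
The plan is to address the two parts separately: part (1) reduces to standard blowup intersection theory combined with a section-counting estimate, while part (2) requires a finer analysis of the volume function near the boundary of the nef cone.

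For part (1), I would first evaluate $((\sigma^*L-xF)^{\cdot n})$ by binomial expansion. The mixed terms $(\sigma^*L)^{n-k}\cdot F^{k}$ for $1\leq k\leq n-1$ vanish by the projection formula (since $\sigma_*(F^{k})$ is a cycle supported on the point $p$ with the wrong dimension to contribute), while $(F^{\cdot n})_{\hat{X}}=(-1)^{n-1}$ follows from $F\cong\pr^{n-1}$ with normal bundle $\sO_F(-1)$. This yields $((\sigma^*L-xF)^{\cdot n})=(L^{\cdot n})-x^n$. For the volume inequality, I would use the short exact sequence
\[
0\to I_p^m\otimes\sO_X(kL)\to\sO_X(kL)\to(\sO_X/I_p^m)\otimes\sO_X(kL)\to 0
\]
to obtain $h^0(\hat{X},k\sigma^*L-mF)=h^0(X,kL\otimes I_p^m)\geq h^0(X,kL)-\binom{m+n-1}{n}$, using $\dim_{\C}(\sO_X/I_p^m)=\binom{m+n-1}{n}$ at the smooth point $p$. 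Setting $m=\lceil kx\rceil$, dividing by $k^n/n!$, and letting $k\to\infty$ (with continuity of $\vol$ absorbing irrational $x$) gives $\vol_{\hat{X}}(\sigma^*L-xF)\geq(L^{\cdot n})-x^n$.

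For part (2), the inclusion $[0,\varepsilon_p(L)]\subseteq\Lambda_p(L)$ is straightforward: $\sigma^*L-xF$ is ample for $x<\varepsilon_p(L)$ and nef at $x=\varepsilon_p(L)$ as a limit of ample classes, so $\vol$ coincides with the self-intersection. The remaining task is to show that for every $x>\varepsilon_p(L)$ some $y\in(\varepsilon_p(L),x]$ falls outside $\Lambda_p(L)$. If $x>\tau_p(L)$, I would take $y\in(\tau_p(L),x]$: then $\vol=0$ while $(L^{\cdot n})-y^n<(L^{\cdot n})-\tau_p(L)^n\leq 0$ by part (1) applied at $y=\tau_p(L)$, so $y\notin\Lambda_p(L)$. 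The substantive regime is therefore $\varepsilon_p(L)<y<\tau_p(L)$, where $\sigma^*L-yF$ is big but not nef; by the Seshadri definition there exists an irreducible curve $C\subset X$ through $p$ with $(L\cdot C)/\mathrm{mult}_pC<y$ whose strict transform $\tilde{C}\subset\hat{X}$ satisfies $(\sigma^*L-yF)\cdot\tilde{C}<0$, so every section of $k(\sigma^*L-yF)$ vanishes on $\tilde{C}$, imposing constraints beyond the naive high-order vanishing at $p$ used in part (1).

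\textit{The main obstacle} is upgrading this single-curve vanishing --- of codimension $O(k)$ in $H^0(X,kL)$ and hence negligible at the $k^n$ scale --- to a discrepancy of order $k^n$ between $\vol_{\hat{X}}(\sigma^*L-yF)$ and $(L^{\cdot n})-y^n$. The cleanest route I foresee is reduction to a very general complete-intersection surface $S\subset X$ through $p$: for such $S$ the Seshadri constant $\varepsilon_p(L|_S,S)$ coincides with $\varepsilon_p(L,X)$, and on $\hat{S}$ the classical Zariski decomposition $\sigma_S^*(L|_S)-yF_S=P+N$ (with $N\neq 0$ since the restricted class is non-nef) immediately yields $\vol_{\hat{S}}=(P^{\cdot 2})$ strictly greater than $(L|_S)^{\cdot 2}-y^2=(P^{\cdot 2})+(N^{\cdot 2})$ because $(N^{\cdot 2})<0$; this strict surface inequality then transfers to $\hat{X}$ via comparison of volume functions. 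An alternative, more analytic route would use the Boucksom--Favre--Jonsson differentiability formula $\tfrac{d}{dy}\vol_{\hat{X}}(\sigma^*L-yF)=-n\cdot\vol_{\hat{X}|F}(\sigma^*L-yF)$ together with the restricted-volume bound $\vol_{\hat{X}|F}\leq\vol_F(yH)=y^{n-1}$ (equality characterizing $F\not\subseteq\mathbb{B}_+(\sigma^*L-yF)$ by Nakamaye), which would rule out the equality $\vol=(L^{\cdot n})-y^n$ extending past $\varepsilon_p(L)$.
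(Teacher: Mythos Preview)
Your treatment of part~(1) and of the easy inclusion $[0,\varepsilon_p(L)]\subseteq\Lambda_p(L)$ in part~(2) is correct and essentially the paper's. The gap is in the hard direction of part~(2), where neither route you sketch actually closes. In Route~1, the equality $\varepsilon_p(L|_S,S)=\varepsilon_p(L,X)$ for a very general surface $S\ni p$ is unjustified: since every curve in $S$ is also a curve in $X$ one gets $\varepsilon_p(L|_S,S)\geq\varepsilon_p(L,X)$, but a Seshadri-extremal curve in $X$ need not lie in $S$, and if the inequality is strict then for $y$ between the two constants the class on $\hat S$ is still nef and the Zariski-decomposition argument produces nothing; the ``transfer'' to $\hat X$ is also left vague. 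In Route~2, your implicit endpoint---that $y>\varepsilon_p(L)$ forces $F\subseteq\B_+(\sigma^*L-yF)$, contradicting what the derivative gives---is false: just past $\varepsilon_p(L)$ the augmented base locus is typically the strict transform of a Seshadri-extremal curve, which meets $F$ but does not contain it (blow up $\pr^1\times\pr^1$ at a point with $L=\sO(1,2)$: for $1<y<2$ one computes $\B_+(\sigma^*L-yF)=\tilde{f}_2\neq F$ while $\vol_{\hat X|F}=1<y$). So deducing $F\not\subseteq\B_+$ yields no contradiction.

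The paper proceeds cohomologically instead. From the restriction sequence to the thickening $kx_tF$, the hypothesis $x_t\in\Lambda_p(L)$ forces $\limsup_k h^i\bigl(\hat X,k(\sigma^*L-x_tF)\bigr)/(k^n/n!)=0$ for \emph{every} $i\geq 1$ (the $h^1$ case is exactly where the volume equality is used; $i\geq 2$ comes from $h^i(\hat X,\sigma^*\sO_X(kL))=0$ and the vanishing on $kx_tF$). Writing $\sigma^*L-(a-\varepsilon)F=t(\sigma^*L-\delta F)+(1-t)(\sigma^*L-x_tF)$ with the first summand ample and $t>0$ small, the de~Fernex--K\"uronya--Lazarsfeld criterion then shows $\sigma^*L-(a-\varepsilon)F$ is ample, whence $a\leq\varepsilon_p(L)$. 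The idea you are missing is this passage from the volume equality on an interval to asymptotic vanishing of all higher cohomology, together with the dFKL ampleness criterion that converts such vanishing into ampleness.
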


\begin{proof}
Take any $k\in\Z_{>0}$ such that $kL$ is Cartier. For any $j\in\Z_{>0}$, we have 
\begin{eqnarray*}
h^0\left(jF, \sigma^*\sO_X(kL)|_{jF}\right) & = & \sum_{l=0}^{j-1}\binom{n-1+l}{n-1}
=\binom{n-1+j}{n},\\
h^i\left(jF, \sigma^*\sO_X(kL)|_{jF}\right) & = & 0 \,\,\,(\text{if } i>0), 
\end{eqnarray*}
since we have exact sequences
\[
0\to \sO_{\pr^{n-1}}(l)\to \sigma^*\sO_X(kL)|_{(l+1)F} \to \sigma^*\sO_X(kL)|_{lF} \to 0
\]
for all $1\leq l\leq j-1$. 

\eqref{sesh_thm1}
We can assume that $x\in\Q_{>0}$ since the function $\vol_{\hat{X}}(\sigma^*L-xF)$ 
is continuous. Take any sufficiently large 
$k\in\Z_{>0}$ with $kx\in\Z_{>0}$ and $kL$ Cartier. 
Since \[
H^1\left(\hat{X}, \sigma^*\sO_X(kL)\right)\simeq H^1\left(X, \sO_X(kL)\right)=0, 
\]
we get the following exact sequence: 
\begin{eqnarray*}
0\to H^0\left(\hat{X}, \sO_{\hat{X}}(\sigma^*(kL)-kxF)\right)
\to H^0\left(\hat{X}, \sigma^*\sO_X(kL)\right)\\
\to H^0\left(kxF, \sigma^*\sO_X(kL)|_{kxF}\right)
\to H^1\left(\hat{X}, \sO_{\hat{X}}(\sigma^*(kL)-kxF)\right)
\to 0.
\end{eqnarray*}
Thus we have
\begin{eqnarray*}
&&h^0\left(\hat{X}, \sO_{\hat{X}}(\sigma^*(kL)-kxF)\right)\\
&\geq& h^0\left(X, \sO_X(kL)\right)-\binom{n-1+kx}{n}
=\frac{(L^{\cdot n})-x^n}{n!}k^n+o(k^n).
\end{eqnarray*}

\eqref{sesh_thm2}
Let $a$ be the right-hand side of the equation in \eqref{sesh_thm2}. 
For any nef divisor $M$, the volume of $M$ is equal to the self intersection number. 
Thus the inequality $\varepsilon_p(L)\leq a$ is obvious. In particular, we have $a>0$. 
Take any $\varepsilon\in\R_{>0}$ such that $a-\varepsilon\in\Q_{>0}$. 
It is enough to show that $\sigma^*L-(a-\varepsilon)F$ is ample in order to 
show the inequality $\varepsilon_p(L)\geq a$. 
Fix $\delta\in\Q_{>0}$ such that $\delta<\varepsilon_p(L)$, that is, 
$\sigma^*L-\delta F$ is ample. Take any rational number $t$ with 
\[
0\leq t<\min\left\{1, \frac{a-\varepsilon}{\delta}, \frac{\varepsilon}{a-\delta}\right\}, 
\]
and set $x_t:=(a-\varepsilon-t\delta)/(1-t)$. We note that $x_t\in(0, a)\cap\Q$. 
Moreover, we have
\[
\sigma^*L-(a-\varepsilon)F-t(\sigma^*L-\delta F)=(1-t)(\sigma^*L-x_t F). 
\]
Take any sufficiently large $k\in\Z_{>0}$ with $kx_t\in\Z_{>0}$ and $kL$ Cartier. 
Then, from the exact sequence 
\[
0\to \sO_{\hat{X}}\left(\sigma^*(kL)-kx_tF\right)
\to \sO_{\hat{X}}\left(\sigma^*(kL)\right)
\to \sigma^*\sO_X(kL)|_{kx_tF}\to 0
\]
and the previous arguments, we have 
\begin{eqnarray*}
&&\limsup_{k}\frac{h^1\left(\hat{X}, \sO_{\hat{X}}(\sigma^*(kL)-kx_tF)\right)}{k^n/n!}\\
&=&\limsup_{k}\Biggl(\frac{h^0\left(\hat{X}, \sO_{\hat{X}}
(\sigma^*(kL)-kx_tF)\right)}{k^n/n!}\\
&&+\frac{h^0\left(kx_tF, 
\sigma^*\sO_X(kL)|_{kx_tF}\right)}{k^n/n!}
-\frac{h^0(X, \sO_X(kL))}{k^n/n!}\Biggr)\\
&=&\vol_{\hat{X}}(\sigma^*L-x_tF)+x_t^n-(L^{\cdot n})=0
\end{eqnarray*}
since $x_t\in\Lambda_p(L)$. Similarly, we have 
\[
h^i\left(\hat{X}, \sO_{\hat{X}}(\sigma^*(kL)-kx_tF)\right)=
h^i\left(\hat{X}, \sO_{\hat{X}}(\sigma^*(kL))\right)=0
\]
for any $i\geq 2$. Thus, by \cite[\S 2.3 and Theorem 4.1]{dFKL}, 
$\sigma^*L-(a-\varepsilon)F$ is ample. Therefore the assertion follows. 
\end{proof}

\subsection{Log canonical thresholds}\label{lct_section}

\begin{definition}\label{lc_dfn}
\begin{enumerate}
\renewcommand{\theenumi}{\arabic{enumi}}
\renewcommand{\labelenumi}{(\theenumi)}
\item\label{lc_dfn1}
Let $(Y, \Delta)$ be a pair such that $Y$ is a normal variety and 
$\Delta$ is a (possibly non-effective) $\R$-divisor on $Y$ such that 
$K_Y+\Delta$ is $\R$-Cartier. 
The pair $(Y, \Delta)$ is said to be \emph{sub log canonical} if 
$a(E, Y, \Delta)\geq -1$ holds 
for any proper birational morphism $\phi\colon\tilde{Y}\to Y$ with $\tilde{Y}$ normal 
and for any prime divisor $E$ on $\tilde{Y}$, where 
$a(E, Y, \Delta):=\ord_E(K_{\tilde{Y}}-\phi^*(K_Y+\Delta))$. 
\item\label{lc_dfn2}
Let $Y$ be a variety which is log terminal, 
$\da_1,\dots,\da_l\subset\sO_Y$ be coherent nonzero ideal sheaves, and 
$c_1,\dots,c_l$ be (possibly negative) real numbers. 
The pair $(Y, \da_1^{\cdot c_1}\cdots\da_l^{\cdot c_l})$ is said to be 
\emph{sub log canonical} if $a(E, Y, \da_1^{\cdot c_1}\cdots\da_l^{\cdot c_l})\geq -1$ 
holds for any proper birational morphism 
$\phi\colon\tilde{Y}\to Y$ with $\tilde{Y}$ normal 
and for any prime divisor $E$ on $\tilde{Y}$, where $a(E, Y, 
\da_1^{\cdot c_1}\cdots\da_l^{\cdot c_l}):=\ord_E(K_{\tilde{Y}}-\phi^*K_Y)
-\sum_{i=1}^lc_i\cdot\ord_E(\da_i)$. 
\item\label{lc_dfn3}
Let $Y$ be a variety which is log terminal, $r_0\in\Z_{>0}$, $\{\da_r\}_{r\geq r_0}$ be a 
graded family of coherent ideal sheaves on $Y$, that is, $\da_r\cdot\da_{r'}\subset
\da_{r+r'}$ holds for any $r$, $r'\geq r_0$, $\db_1,\dots,\db_l\subset\sO_Y$ be 
coherent nonzero ideal sheaves, $c_1,\dots,c_l\in\R$ and $c\in\R_{>0}$. 
The pair $(Y, \da_\bullet^{\cdot c}\cdot\db_1^{\cdot c_1}\cdots\db_l^{\cdot c_l})$
is said to be \emph{sub log canonical} if 
$a(E, Y, \da_\bullet^{\cdot c}\cdot\db_1^{\cdot c_1}\cdots\db_l^{\cdot c_l})\geq -1$
holds for any proper birational morphism 
$\phi\colon\tilde{Y}\to Y$ with $\tilde{Y}$ normal 
and for any prime divisor $E$ on $\tilde{Y}$, where 
$a(E, Y, \da_\bullet^{\cdot c}\cdot\db_1^{\cdot c_1}\cdots\db_l^{\cdot c_l})$
is defined by the value
\[
\ord_E(K_{\tilde{Y}}-\phi^*K_Y)-\sum_{i=1}^l c_i\cdot\ord_E(\db_i)
-\liminf_{r\to\infty}\frac{c\cdot\ord_E(\da_r)}{r}.
\]
\end{enumerate}
\end{definition}

\begin{lemma}\label{gr_lc_lem}
Let $Y$ be a variety which is log terminal, $r_0\in\Z_{>0}$, $\{\da_r\}_{r\geq r_0}$ be a 
graded family of coherent ideal sheaves on $Y$, $\db\subset\sO_Y$ be a coherent 
nonzero ideal sheaf, $c\in\R_{>0}$ and $a\in\R$. 
\begin{enumerate}
\renewcommand{\theenumi}{\arabic{enumi}}
\renewcommand{\labelenumi}{(\theenumi)}
\item\label{gr_lc_lem1}
Assume that there exists a sequence $\{a_r\}_{r\geq r_0}$ with $\lim_{r\to\infty}a_r=a$ 
and the pair $(Y, \da_r^{\cdot(c/r)}\cdot\db^{\cdot a_r})$ is 
sub log canonical for any sufficiently divisible $r\gg 0$. Then the pair 
$(Y, \da_\bullet^{\cdot c}\cdot\db^{\cdot a})$ is sub log canonical.
\item\label{gr_lc_lem2}
Assume that there exists a coherent ideal sheaf $I\subset\sO_Y$ such that 
$\da_r\subset I^r$ for any $r\geq r_0$ 
and the pair $(Y, \da_\bullet^{\cdot c}\cdot\db^{\cdot a})$ 
is sub log canonical. Then the pair 
$(Y, I^{\cdot c}\cdot\db^{\cdot a})$ is sub log canonical. 
\end{enumerate}
\end{lemma}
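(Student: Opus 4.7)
Both parts are essentially direct computations starting from the valuation-theoretic definition of ``sub log canonical'' in Definition \ref{lc_dfn}. The plan is to fix, once and for all, an arbitrary proper birational morphism $\phi\colon\tilde{Y}\to Y$ with $\tilde{Y}$ normal and an arbitrary prime divisor $E$ on $\tilde{Y}$, and to verify the required discrepancy inequality $a(E,Y,*)\geq -1$ for the target pair using the hypothesized sub-log-canonicity. A mild but useful observation in the background is that, since $\{\da_r\}$ is a graded family, the function $r\mapsto \ord_E(\da_r)$ is subadditive (from $\da_r\cdot\da_{r'}\subset\da_{r+r'}$), so by Fekete's lemma $\lim_{r\to\infty}\ord_E(\da_r)/r$ exists and equals $\inf_{r}\ord_E(\da_r)/r$; in particular the $\liminf$ in Definition \ref{lc_dfn}\eqref{lc_dfn3} agrees with the limit along any cofinal subset of indices.

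For part \eqref{gr_lc_lem1}, I would rearrange the hypothesis $a(E,Y,\da_r^{\cdot(c/r)}\cdot\db^{\cdot a_r})\geq -1$, valid for all sufficiently divisible $r$, as
\[
\frac{c\cdot\ord_E(\da_r)}{r}\;\leq\;\ord_E(K_{\tilde{Y}}-\phi^*K_Y)-a_r\cdot\ord_E(\db)+1.
\]
Since $a_r\to a$, the right-hand side converges to $\ord_E(K_{\tilde{Y}}-\phi^*K_Y)-a\cdot\ord_E(\db)+1$. Taking $\liminf$ on the left along the cofinal set of sufficiently divisible $r$ (which, by the preceding remark, coincides with the $\liminf$ in Definition \ref{lc_dfn}\eqref{lc_dfn3}) yields
\[
\liminf_{r\to\infty}\frac{c\cdot\ord_E(\da_r)}{r}\;\leq\;\ord_E(K_{\tilde{Y}}-\phi^*K_Y)-a\cdot\ord_E(\db)+1,
\]
which is exactly $a(E,Y,\da_\bullet^{\cdot c}\cdot\db^{\cdot a})\geq -1$.

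For part \eqref{gr_lc_lem2}, the inclusion $\da_r\subset I^r$ implies $\ord_E(\da_r)\geq r\cdot\ord_E(I)$ for every $r\geq r_0$, and hence
\[
\liminf_{r\to\infty}\frac{c\cdot\ord_E(\da_r)}{r}\;\geq\;c\cdot\ord_E(I).
\]
Substituting this lower bound into the hypothesized inequality $a(E,Y,\da_\bullet^{\cdot c}\cdot\db^{\cdot a})\geq -1$ gives
\[
\ord_E(K_{\tilde{Y}}-\phi^*K_Y)-a\cdot\ord_E(\db)-c\cdot\ord_E(I)\;\geq\;-1,
\]
which is the desired bound $a(E,Y,I^{\cdot c}\cdot\db^{\cdot a})\geq -1$.

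Since $\phi$ and $E$ were arbitrary, both conclusions follow. There is no genuinely hard step here: the only subtle point is making sure that ``sufficiently divisible $r\gg 0$'' in \eqref{gr_lc_lem1} is cofinal enough to compute the $\liminf$ appearing in the definition, which is handled by the subadditivity of $\ord_E(\da_r)$ noted above.
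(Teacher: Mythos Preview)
Your proposal is correct and follows essentially the same route as the paper: fix an arbitrary $\phi\colon\tilde{Y}\to Y$ and prime divisor $E$, use subadditivity of $r\mapsto\ord_E(\da_r)$ to see that the $\liminf$ over all $r$ agrees with the $\liminf$ along multiples (the paper phrases this directly rather than via Fekete's lemma), and then pass to the limit in the discrepancy inequality for part \eqref{gr_lc_lem1} and use $\ord_E(\da_r)\geq r\cdot\ord_E(I)$ for part \eqref{gr_lc_lem2}.
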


\begin{proof}
Take any proper birational morphism $\phi\colon\tilde{Y}\to Y$ with $\tilde{Y}$ normal 
and a prime divisor $E$ on $\tilde{Y}$. 
For any $r\geq r_0$ and $k\in\Z_{>0}$, we have 
\[
\frac{1}{kr}\ord_E(\da_{kr})\leq\frac{1}{kr}\ord_E(\da_r^k)=\frac{1}{k}\ord_E(\da_r). 
\]
Thus we have 
\[
\liminf_{r\to\infty}\frac{c\cdot\ord_E(\da_r)}{r}=
\liminf_{r\to\infty}\frac{c\cdot\ord_E(\da_{kr})}{kr}
\]
for any $k\in\Z_{>0}$. 

\eqref{gr_lc_lem1}
By assumption, for any sufficiently divisible $r\gg 0$, 
\[
-1\leq\ord_E(K_{\tilde{Y}}-\phi^*K_Y)-\frac{c\cdot\ord_E(\da_r)}{r}-a_r\cdot\ord_E(\db)
\]
holds. By taking $\limsup_{r\to\infty}$, we have 
$-1\leq a(E, Y, \da_\bullet^{\cdot c}\cdot\db^{\cdot a})$. 

\eqref{gr_lc_lem2}
For any $r\geq r_0$, we have $cr^{-1}\cdot\ord_E(\da_r)\geq c\cdot\ord_E(I)$. 
Thus we get the inequality 
$-1\leq a(E, Y, I^{\cdot c}\cdot\db^{\cdot a})$. 
\end{proof}

\begin{definition}\label{lct_dfn}
\begin{enumerate}
\renewcommand{\theenumi}{\arabic{enumi}}
\renewcommand{\labelenumi}{(\theenumi)}
\item\label{lct_dfn1}
Let $(Y, \Delta)$ be a pair as in Definition \ref{lc_dfn} \eqref{lc_dfn1} and 
$B$ be a nonzero effective $\R$-Cartier divisor on $Y$. 
The \emph{log canonical threshold $\lct(Y, \Delta; B)$ of $B$ with respects to 
$(Y, \Delta)$} is defined by the following: 
\begin{itemize}
\item
If the pair $(Y, \Delta+cB)$ is not sub log canonical for any $c\in\R$, then we set 
$\lct(Y, \Delta; B):=-\infty$. 
\item
Otherwise, we set 
\[
\lct(Y, \Delta; B):=\sup\{c\in\R\,|\,(Y, \Delta+cB):\text{ sub log canonical}\}.
\]
\end{itemize}
\item\label{lct_dfn2}
Let $(Y, \da_1^{\cdot c_1}\cdots\da_l^{\cdot c_l})$ be a pair 
as in Definition \ref{lc_dfn} \eqref{lc_dfn2} and 
$0\neq\db\subsetneq\sO_Y$ be a coherent ideal sheaf. 
The \emph{log canonical threshold 
$\lct(Y, \da_1^{\cdot c_1}\cdots\da_l^{\cdot c_l}; \db)$ of $\db$ with respects to 
$(Y, \da_1^{\cdot c_1}\cdots\da_l^{\cdot c_l})$} is defined by the following: 
\begin{itemize}
\item
If the pair $(Y, \da_1^{\cdot c_1}\cdots\da_l^{\cdot c_l}\cdot\db^{\cdot c})$ 
is not sub log canonical for any $c\in\R$, then we set 
$\lct(Y, \da_1^{\cdot c_1}\cdots\da_l^{\cdot c_l}; \db):=-\infty$. 
\item
Otherwise, we set 
\begin{eqnarray*}
&&\lct(Y, \da_1^{\cdot c_1}\cdots\da_l^{\cdot c_l}; \db)\\
&:=&
\sup\{c\in\R\,|\,(Y, \da_1^{\cdot c_1}\cdots\da_l^{\cdot c_l}\cdot\db^{\cdot c}):\text{ sub log canonical}\}.
\end{eqnarray*}
\end{itemize}
Moreover, if $l=1$ and $\da_1=\sO_Y$, then we write 
$\lct(Y; \db):=\lct(Y, \da_1^{\cdot c_1}; \db)$ for simplicity. 
\end{enumerate}
\end{definition}

\subsection{K-stability}\label{K_section}

\begin{definition}[{\cite{tian1, don, RT, odk, LX}}]\label{K_dfn}
Let $X$ be an $n$-dimensional $\Q$-Fano variety. 
\begin{enumerate}
\renewcommand{\theenumi}{\arabic{enumi}}
\renewcommand{\labelenumi}{(\theenumi)}
\item\label{K_dfn1}
Let $r\in\Z_{>0}$ such that $-rK_X$ is Cartier. 
A \emph{test configuration} (resp.\ a \emph{semi test configuration}) 
$(\sX, \sL)/\A^1$ \emph{of} $(X, -rK_X)$ consists of the following data:
\begin{itemize}
\item
a variety $\sX$ such that admitting $\G_m$-action and the morphism 
$\alpha\colon\sX\to\A^1$ is $\G_m$-equivariant, 
where the action $\G_m\times\A^1\to\A^1$ is given by $(a, t)\mapsto at$, and
\item
a $\G_m$-equivariant $\alpha$-ample (resp.\ $\alpha$-semiample) line bundle 
$\sL$ on $\sX$ such that $(\sX, \sL)|_{\alpha^{-1}(\A^1\setminus\{0\})}$ is 
$\G_m$-equivariantly isomorphic to $(X, \sO_X(-rK_X))\times(\A^1\setminus\{0\})$ 
with the natural $\G_m$-action.
\end{itemize}
Moreover, if $\sX$ is normal in addition, then we call the $(\sX, \sL)/\A^1$ 
a \emph{normal test configuration} (resp.\ a \emph{normal semi test configuration})
\emph{of} $(X, -rK_X)$. 
\item\label{K_dfn2}
Assume that $(\sX, \sL)/\A^1$ is a normal semi test configuration of $(X, -rK_X)$. 
Let $\alpha\colon(\bar{\sX}, \bar{\sL})\to\pr^1$ be the natural equivariant 
compactification of $(\sX, \sL)\to\A^1$ induced by the compactification 
$\A^1\subset\pr^1$. The \emph{Donaldson-Futaki invariant} 
$\DF(\sX, \sL)$ of $(\sX, \sL)/\A^1$ is defined by 
\[
\DF(\sX, \sL):=\frac{1}{(n+1)((-K_X)^{\cdot n})}
\left(\frac{n}{r^{n+1}}(\bar{\sL}^{\cdot n+1})+
\frac{n+1}{r^n}(\bar{\sL}^{\cdot n}\cdot K_{\bar{\sX}/\pr^1})\right).
\]
\item\label{K_dfn3}
\begin{itemize}
\item
The pair $(X, -K_X)$ is called \emph{K-semistable} if $\DF(\sX, \sL)\geq 0$ for 
any normal test configuration $(\sX, \sL)/\A^1$ of $(X, -rK_X)$. 
\item
The pair $(X, -K_X)$ is called \emph{K-polystable} if $\DF(\sX, \sL)\geq 0$ for 
any normal test configuration $(\sX, \sL)/\A^1$ of $(X, -rK_X)$, and the equality holds 
only if $\sX\simeq X\times\A^1$.  
\item
The pair $(X, -K_X)$ is called \emph{K-stable} if $\DF(\sX, \sL)\geq 0$ for 
any normal test configuration $(\sX, \sL)/\A^1$ of $(X, -rK_X)$, and the equality holds 
only if the pair $(\sX, \sL)$ is trivial, that is, the pair $(\sX, \sL)$ is 
$\G_m$-equivariantly isomorphic to the pair 
$(X\times\A^1, \sO_{X\times\A^1}(-rK_{X\times\A^1/\A^1}))$ 
with the natural $\G_m$-action. 
\end{itemize}
\end{enumerate}
\end{definition}

\section{Ding polystability}\label{ding_section}

We recall the theory in \cite[\S 3]{B}. The author learned the theory from Odaka. 

\begin{definition}[{see \cite[\S 3]{B}}]\label{ding_dfn}
Let $X$ be an $n$-dimensional $\Q$-Fano variety. 
\begin{enumerate}
\renewcommand{\theenumi}{\arabic{enumi}}
\renewcommand{\labelenumi}{(\theenumi)}
\item\label{ding_dfn1}
Let $(\sX, \sL)/\A^1$ be a normal semi test configuration of $(X, -rK_X)$ and 
$(\bar{\sX}, \bar{\sL})/\pr^1$ be its natural compactification 
as in Definition \ref{K_dfn} \eqref{K_dfn2}. 
\begin{enumerate}
\renewcommand{\theenumii}{\roman{enumii}}
\renewcommand{\labelenumii}{(\theenumii)}
\item\label{ding_dfn11}
Let $D_{(\sX, \sL)}$ be the $\Q$-divisor on $\sX$ such that the following conditions 
are satisfied: 
\begin{itemize}
\item
The support $\Supp D_{(\sX, \sL)}$ is contained in $\sX_0$. (Note that 
$\sX_0$ is the fiber of $\sX\to\A^1$ at $0\in\A^1$.) 
\item 
The divisor $-rD_{(\sX, \sL)}$ is a $\Z$-divisor corresponds to the divisorial sheaf 
$\bar{\sL}(rK_{\bar{\sX}/\pr^1})$. (Thus the divisor 
$-r(K_{\bar{\sX}/\pr^1}+D_{(\sX, \sL)})$ is a Cartier divisor corresponds to 
$\bar{\sL}$.)
\end{itemize}
Since the divisorial sheaf $\bar{\sL}(rK_{\bar{\sX}/\pr^1})$ is trivial on 
$\bar{\sX}\setminus\sX_0$, the $D_{(\sX, \sL)}$ exists and is unique. 
\item\label{ding_dfn12}
The \emph{Ding invariant} $\Ding(\sX, \sL)$ of $(\sX, \sL)/\A^1$ is defined by 
\[
\Ding(\sX, \sL):=\frac{-(\bar{\sL}^{\cdot n+1})}{(n+1)r^{n+1}((-K_X)^{\cdot n})}
-\left(1-\lct(\sX, D_{(\sX, \sL)}; \sX_0)\right).
\]
\end{enumerate}
\item\label{ding_dfn2}
\begin{itemize}
\item
$X$ is called \emph{Ding semistable} if $\Ding(\sX, \sL)\geq 0$ for any normal 
test configuration $(\sX, \sL)/\A^1$ of $(X, -rK_X)$. 
\item
$X$ is called \emph{Ding polystable} if 
\begin{itemize}
\item
$X$ is Ding semistable, and
\item
if $(\sX, \sL)$ is a normal test configuration of $(X, -rK_X)$ which satisfies that 
$\sL\simeq\sO_{\sX}(-rK_{\sX/\A^1})$, $\sX_0$ is log terminal and $\Ding(\sX, \sL)=0$, 
then $\sX\simeq X\times\A^1$. 
\end{itemize}
\end{itemize}
\end{enumerate}
\end{definition}

The following is a theorem of Berman. 

\begin{thm}[{\cite{B}}]\label{berman_thm}
Let $X$ be a $\Q$-Fano variety. 
\begin{enumerate}
\renewcommand{\theenumi}{\arabic{enumi}}
\renewcommand{\labelenumi}{(\theenumi)}
\item\label{berman_thm1}
If $X$ admits K\"ahler-Einstein metrics, then $X$ is Ding polystable. 
\item\label{berman_thm2}
For any normal test configuration $(\sX, \sL)/\A^1$ of $(X, -rK_X)$, we have 
$\DF(\sX,\sL)\geq \Ding(\sX, \sL)$. Moreover, the equality holds if and only if 
\begin{itemize}
\item
$\sL\simeq\sO_{\sX}(-rK_{\sX/\A^1})$, and
\item
the pair $(\sX, \sX_0)$ is log canonical.
\end{itemize}
\end{enumerate}
\end{thm}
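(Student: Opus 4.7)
My plan for part (1) is to import Berman's analytic argument from \cite[\S 3]{B}, since the machinery of geodesic rays in the space of K\"ahler potentials is essential and not easy to replace. The mechanism is as follows. A normal test configuration $(\sX,\sL)$ with $\sL \simeq \sO_\sX(-rK_{\sX/\A^1})$ produces a weak geodesic ray $\{\phi_t\}_{t\geq 0}$ of positively curved weights on $-K_X$, and Berman's slope formula identifies the asymptotic slope of the Ding functional along $\phi_t$ with $\Ding(\sX,\sL)$. If $X$ admits a K\"ahler--Einstein metric $\omega_{KE}$, then by Berman's variational characterization on $\Q$-Fano varieties, $\omega_{KE}$ is a global minimizer of the Ding functional, so the slope along every such ray is nonnegative, yielding Ding semistability. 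When additionally $\sX_0$ is log terminal and the slope vanishes, rigidity of geodesic rays emanating from a minimizer forces the ray to come from a one-parameter subgroup of $\Aut(X,-K_X)$, hence $\sX \simeq X\times\A^1$. A general normal test configuration is reduced to the case $\sL \simeq \sO_\sX(-rK_{\sX/\A^1})$ using part (2).

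For part (2) I carry out a direct algebraic computation. Starting from the defining relation $\bar\sL + rK_{\bar\sX/\pr^1} \sim -rD_{(\sX,\sL)}$ in Definition \ref{ding_dfn}, collecting terms gives
\[
\DF(\sX,\sL)-\Ding(\sX,\sL) = \frac{-\bigl(\bar\sL^{\cdot n}\cdot D_{(\sX,\sL)}\bigr)}{r^{n}((-K_X)^{\cdot n})} + 1 - \lambda,
\]
where $\lambda:=\lct(\sX,D_{(\sX,\sL)};\sX_0)$. Write the central fiber as $\sX_0 = \sum_i a_i E_i$ with $a_i\in\Z_{>0}$ and $D_{(\sX,\sL)}=\sum_i d_i E_i$ with $d_i\in\Q$, and set $\ell_i:=(\bar\sL^{\cdot n}\cdot E_i)$. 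Since $\sL$ is $\alpha$-ample, $\bar\sL|_{E_i}$ is ample, so $\ell_i>0$; flatness of $\alpha$ gives $\sum_i a_i\ell_i = r^n((-K_X)^{\cdot n})$. Testing the log canonical threshold against each prime component $E_i$ (taken as a computing divisor on $\sX$ itself) yields $\lambda\leq (1-d_i)/a_i$, which rearranges to $(1-\lambda)a_i - d_i \geq a_i - 1 \geq 0$. Substituting,
\[
\DF(\sX,\sL)-\Ding(\sX,\sL) = \frac{\sum_i \bigl((1-\lambda)a_i - d_i\bigr)\ell_i}{\sum_i a_i \ell_i} \geq 0,
\]
which is the asserted inequality.

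The main obstacle will be extracting the sharp equality characterization. Termwise, equality forces both $a_i = 1$ and $d_i = 1-\lambda$ for every $i$ (using $\ell_i>0$), so $\sX_0$ is reduced and $D_{(\sX,\sL)}=(1-\lambda)\sX_0$. My plan is then: (i) restrict to $\sX$: the divisor $\sX_0|_\sX$ is principal (it is $\DIV(t)$), hence $D_{(\sX,\sL)}|_\sX$ is a principal $\Q$-divisor, so $-rD_{(\sX,\sL)} = \bar\sL + rK_{\bar\sX/\pr^1}$ restricts to zero in $\Pic(\sX)$, giving $\sL \simeq \sO_\sX(-rK_{\sX/\A^1})$; (ii) use the functoriality $\lct(\sX, c\sX_0; \sX_0) = \lct(\sX;\sX_0) - c$ together with $\lambda = \lct(\sX,(1-\lambda)\sX_0;\sX_0)$ to deduce $\lct(\sX;\sX_0) = 1$, i.e., $(\sX,\sX_0)$ is log canonical. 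The converse direction amounts to a direct substitution. The delicate technical point is step (ii), where one must control the log canonical threshold globally rather than only against components of $\sX_0$, which requires understanding the behavior of $\lct$ under shifts by $c\sX_0$ through a log resolution of $(\sX,\sX_0)$.
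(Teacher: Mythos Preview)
Your argument for part \eqref{berman_thm2} is correct and is essentially the paper's computation. The paper writes the key step as the divisor inequality $(1-\lambda)\sX_0 - D_{(\sX,\sL)} \geq \sum_i (a_i-1)E_i \geq 0$ and intersects with $\bar\sL^{\cdot n}$; your formulation as a weighted average $\sum_i((1-\lambda)a_i-d_i)\ell_i\big/\sum_i a_i\ell_i$ is the same identity. Your treatment of the equality case is in fact more explicit than the paper's, which stops at ``$\sX_0$ reduced and $D=(1-\lambda)\sX_0$'' and asserts the conclusion. Step (ii) is less delicate than you fear: the identity $\lct(\sX,c\sX_0;\sX_0)=\lct(\sX;\sX_0)-c$ is immediate from the definition, since $(\sX,c\sX_0+c'\sX_0)$ is sub log canonical if and only if $(\sX,(c+c')\sX_0)$ is; no log resolution is needed.

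There is, however, a gap in your plan for part \eqref{berman_thm1}. You restrict Berman's slope formula to test configurations with $\sL\simeq\sO_\sX(-rK_{\sX/\A^1})$ and then assert that the general case ``is reduced to the case $\sL\simeq\sO_\sX(-rK_{\sX/\A^1})$ using part (2)''. Part \eqref{berman_thm2} only says $\DF\geq\Ding$; it gives no inequality relating $\Ding(\sX,\sL)$ for a general $\sL$ to the Ding invariant of any special test configuration, so no such reduction follows. Berman's analytic argument in fact applies to arbitrary normal test configurations, and the paper makes this explicit algebraically: it passes to a $\G_m$-equivariant log resolution $\gamma\colon\sX'\to\sX$, writes $\gamma^*\bar\sL(rK_{\bar\sX'/\pr^1})=\sO_{\bar\sX'}(r\Delta')$, and expresses both $\lct(\sX,D_{(\sX,\sL)};\sX_0)=\min_i(1+c'_i)/m'_i$ and the intersection term in the coefficients $c'_i$, $m'_i$ of $\Delta'+\bar D^*$ and $\sX'_0$. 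This recovers exactly Berman's formula (3.30) in \cite{B} for \emph{general} $\sL$, after which \cite[Theorem 3.11 and Formula (3.2)]{B} give $\Ding(\sX,\sL)\geq 0$. For polystability the paper then observes that under the extra hypotheses ($\sL\simeq\sO_\sX(-rK_{\sX/\A^1})$, $\sX_0$ log terminal) one has $D_{(\sX,\sL)}=c\sX_0$ and $q(\sX,\sL)=0$, hence $\DF=\Ding=0$, and invokes \cite[Proposition 3.5]{B}. So the correct fix is not to reduce via part \eqref{berman_thm2}, but to run Berman's input for arbitrary $\sL$ through the log resolution.
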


\begin{proof}
We repeat the proof in \cite[\S 3]{B} for the reader's convenience. 
Pick any normal test configuration $(\sX, \sL)/\A^1$ of $(X, -rK_X)$. 
Set $n:=\dim X$ and 
\begin{eqnarray*}
q(\sX, \sL) & := & \DF(\sX, \sL)-\Ding(\sX, \sL)\\
 & = & 1-\lct(\sX, D_{(\sX, \sL)}; \sX_0)-
\frac{(\bar{\sL}^{\cdot n}\cdot D_{(\sX, \sL)})}{r^n((-K_X)^{\cdot n})}.
\end{eqnarray*}

\eqref{berman_thm1}
Let $\gamma\colon\sX'\to\sX$ be a $\G_m$-equivariant log resolution 
of the pair $(\sX, \sX_0)$ and let $\gamma\colon\bar{\sX}'\to \bar{\sX}$ be its natural 
compactification. Since $\bar{\sX}\setminus\sX_0$ is log terminal, if we set 
$D^*:=-K_{\bar{\sX}'\setminus\sX'_0}+\gamma^*K_{\bar{\sX}\setminus\sX_0}$ then 
any coefficient of $D^*$ is strictly smaller than one. 
Let $\Delta'$ be the $\Q$-divisor on $\bar{\sX}'$ 
such that the following conditions are satisfied: 
\begin{itemize}
\item
The support $\Supp(\Delta'+\bar{D}^*)$ is contained in $\sX'_0$, where 
$\bar{D}^*$ is the closure of $D^*$ in $\bar{\sX}'$.
\item
The divisor $r\Delta'$ is a $\Z$-divisor and corresponds to 
$\gamma^*\bar{\sL}(rK_{\bar{\sX}'/\pr^1})$.
\end{itemize}
Let 
\[
\sX'_0=\sum_{i\in I}m'_iE'_i,\quad\Delta'+\bar{D}^*=\sum_{i\in I}c'_iE'_i
\]
be the irreducible decompositions. By construction, we have 
\[
\gamma^*(K_{\bar{\sX}}+D_{(\sX, \sL)}+c\sX_0)=K_{\bar{\sX}'}-\Delta'+c\sX'_0
\]
for any $c\in\R$. Thus we have 
\[
\lct(\sX, D_{(\sX, \sL)}; \sX_0)=\lct(\sX', -\Delta'; \sX'_0)=
\min_{i\in I}\left\{\frac{1+c'_i}{m'_i}\right\}.
\]
Moreover, we have 
\[
-(\bar{\sL}^{\cdot n}\cdot D_{(\sX, \sL)})
=(\gamma^*\bar{\sL}^{\cdot n}\cdot\Delta'+\bar{D}^*)
=\sum_{i\in I}c'_i(\gamma^*\bar{\sL}^{\cdot n}\cdot E'_i)
\]
since $\gamma_*(\Delta'+\bar{D}^*)=-D_{(\sX, \sL)}$ holds. Therefore, 
\[
q(\sX, \sL)=\max_{i\in I}\left\{\frac{m'_i-1-c'_i}{m'_i}\right\}
+\frac{1}{r^n((-K_X)^{\cdot n})}\sum_{i\in I}c'_i(\gamma^*\bar{\sL}^{\cdot n}\cdot E'_i)
\]
holds. The equation is nothing but Formula (3.30) in \cite{B}. 
Hence, if $X$ admits K\"ahler-Einstein metrics, then $\Ding(\sX, \sL)\geq 0$ holds 
by \cite[Theorem 3.11 and Formula (3.2)]{B} (see also \cite{Bern} and
\cite[Formula (6.5)]{BBGZ}). 
If we further assume that 
$\sL\simeq\sO_{\sX}(-rK_{\sX/\A^1})$, $\sX_0$ is log terminal and $\Ding(\sX, \sL)=0$, 
then $D_{(\sX, \sL)}=c\sX_0$ for some $c\in\Q$ 
and $\lct(\sX, D_{(\sX, \sL)}; \sX_0)=1-c$. 
This implies that $\DF(\sX, \sL)=\Ding(\sX, \sL)$ 
since $q(\sX, \sL)=0$ holds. Hence $\sX\simeq X\times\A^1$ by 
\cite[Theorem 1.1]{B} (more precisely, by \cite[Proposition 3.5]{B}). 
Thus $X$ is Ding polystable. 

\eqref{berman_thm2} (See \cite[Proof of Theorem 3.11]{B}.)
Let 
\[
\sX_0=\sum_{i\in J}m_iE_i,\quad -D_{(\sX, \sL)}=\sum_{i\in J}c_iE_i
\]
be the irreducible decompositions. Note that 
\[
q(\sX, \sL)=\frac{1}{r^n((-K_X)^{\cdot n})}
\left(\bar{\sL}^{\cdot n}\cdot (1-\lct(\sX, D_{(\sX, \sL)}; \sX_0))\sX_0
-D_{(\sX, \sL)}\right).
\]
Since 
\[
1-\lct(\sX, D_{(\sX, \sL)}; \sX_0)\geq\max_{i\in J}\left\{\frac{m_i-1-c_i}{m_i}\right\},
\]
we have 
\begin{eqnarray*}
&&(1-\lct(\sX, D_{(\sX, \sL)}; \sX_0))\sX_0-D_{(\sX, \sL)}\\
&\geq&
\sum_{i\in J}\left(\frac{m_i-1-c_i}{m_i}\cdot m_i+c_i\right)E_i=\sum_{i\in J}(m_i-1)E_i
\geq 0
\end{eqnarray*}
Since $\bar{\sL}$ is $\alpha$-ample, we get $q(\sX, \sL)\geq 0$. 
Moreover, $q(\sX, \sL)=0$ holds if and only if $\sX_0$ is reduced and 
$D_{(\sX, \sL)}=(1-\lct(\sX, D_{(\sX, \sL)}; \sX_0))\sX_0$ holds. 
Thus we get the assertion. 
\end{proof}

\begin{remark}\label{berman_remark}
From Theorem \ref{berman_thm} and \cite[Corollary 1]{LX} (see \cite{B}), 
if a $\Q$-Fano variety $X$ is Ding semistable (resp.\ Ding polystable), then 
the pair $(X, -K_X)$ is K-semistable (resp.\ K-polystable). Thus, 
by \cite{CDS1, CDS2, CDS3, tian2}, if $X$ is a Fano manifold, then  
the following three conditions are equivalent: 
\begin{itemize}
\item
$X$ admits K\"ahler-Einstein metrics.
\item
$X$ is Ding polystable.
\item
$(X, -K_X)$ is K-polystable. 
\end{itemize}
\end{remark}

\begin{lemma}\label{crepant_lem}
Let $X$ be a $\Q$-Fano variety and $\gamma\colon(\sY, \gamma^*\sL)\to
(\sX, \sL)$ be a $\G_m$-equivariant birational morphism between normal 
semi test configurations of $(X, -rK_X)$. Then 
$\Ding(\sX, \sL)=\Ding(\sY, \gamma^*\sL)$ holds. 
\end{lemma}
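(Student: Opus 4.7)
The plan is to split the Ding invariant into its two constituents---the volume-type piece $-(\bar{\sL}^{\cdot n+1})/((n+1)r^{n+1}((-K_X)^{\cdot n}))$ and the log-canonical-threshold piece $-(1-\lct(\sX, D_{(\sX,\sL)};\sX_0))$---and verify that each is unchanged under $\gamma$. Let $\bar\gamma\colon\bar{\sY}\to\bar{\sX}$ denote the natural $\G_m$-equivariant extension of $\gamma$ to the compactifications, which is the identity over $\pr^1\setminus\{0\}$.

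For the volume-type piece, $\bar\gamma$ is a birational morphism between normal projective varieties and $\bar\gamma^*\bar{\sL}$ is the honest line bundle pullback; hence the projection formula yields $((\bar\gamma^*\bar{\sL})^{\cdot n+1})=(\bar{\sL}^{\cdot n+1})$, so this piece is unchanged.

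For the log-canonical-threshold piece, I will show that $\bar\gamma$ is crepant between the log pairs $(\bar{\sY}, D_{(\sY,\gamma^*\sL)}+c\sY_0)$ and $(\bar{\sX}, D_{(\sX,\sL)}+c\sX_0)$ for every $c\in\R$. Two ingredients are needed. First, $\sY_0=\bar\gamma^*\sX_0$ as Cartier divisors, since both equal $\alpha_{\sY}^*[0]$ and $\alpha_{\sY}=\alpha_{\sX}\circ\bar\gamma$. Second,
\[
K_{\bar{\sY}/\pr^1}+D_{(\sY,\gamma^*\sL)}=\bar\gamma^*\bigl(K_{\bar{\sX}/\pr^1}+D_{(\sX,\sL)}\bigr)
\]
as $\Q$-Cartier $\Q$-divisors: both sides are $\Q$-Cartier and correspond to the line bundle $-\tfrac{1}{r}\bar\gamma^*\bar{\sL}$ by the defining property of $D_{(\cdot,\cdot)}$, and the uniqueness of $D_{(\sY,\gamma^*\sL)}$ as the representative supported in $\sY_0$ forces equality as actual divisors. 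Combining these two identities and subtracting $\alpha_{\sY}^*K_{\pr^1}=\bar\gamma^*\alpha_{\sX}^*K_{\pr^1}$ yields
\[
K_{\bar{\sY}}+D_{(\sY,\gamma^*\sL)}+c\sY_0=\bar\gamma^*\bigl(K_{\bar{\sX}}+D_{(\sX,\sL)}+c\sX_0\bigr).
\]
This is the crepant identity, so log discrepancies along every prime divisor over $\bar{\sY}$ coincide for the two pairs. In particular, $(\bar{\sX}, D_{(\sX,\sL)}+c\sX_0)$ is sub log canonical if and only if $(\bar{\sY}, D_{(\sY,\gamma^*\sL)}+c\sY_0)$ is, whence $\lct(\sX, D_{(\sX,\sL)};\sX_0)=\lct(\sY, D_{(\sY,\gamma^*\sL)};\sY_0)$, and the equality of Ding invariants follows.

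The main obstacle is the second ingredient above---passing from the almost-tautological equality of $\Q$-Cartier \emph{classes} to equality as actual $\Q$-Weil divisors. The point is that the difference of the two sides, viewed as a $\Q$-Cartier divisor on $\bar{\sY}$, would be supported in $\sY_0$ and $\Q$-linearly trivial; any such divisor is a rational multiple of $\sY_0=\alpha_{\sY}^*[0]$, which must vanish since $[0]$ is not $\Q$-linearly trivial on $\pr^1$. Once this rigidity is noted, everything else is formal.
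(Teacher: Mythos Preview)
Your proof is correct and follows the same approach as the paper's (very terse) argument: the paper simply asserts the crepant identity $K_{\bar{\sY}}+D_{(\sY,\gamma^*\sL)}=\bar\gamma^*(K_{\bar{\sX}}+D_{(\sX,\sL)})$ and deduces equality of the log canonical thresholds, whereas you spell out both this identity and the projection-formula step for the $(\bar{\sL}^{\cdot n+1})$ term. One minor remark: your rigidity step can be said more directly---a $\Q$-linearly trivial divisor on $\bar{\sY}$ supported in $\sY_0$ is already zero (not just a multiple of $\sY_0$), since $\bar{\sY}\setminus\sY_0\simeq X\times(\pr^1\setminus\{0\})$ has only constant global units.
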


\begin{proof}
Since $K_{\bar{\sY}}+D_{(\sY, \gamma^*\sL)}=\gamma^*(K_{\bar{\sX}}+D_{(\sX, \sL)})$, 
we have 
\[
\lct(\sX, D_{(\sX, \sL)}; \sX_0)=\lct(\sY, D_{(\sY, \gamma^*\sL)}; \sY_0).
\] 
Thus the assertion follows immediately.
\end{proof}

\begin{proposition}\label{ding_flag_prop}
Let $X$ be an $n$-dimensional $\Q$-Fano variety which is Ding semistable, 
let $r$ be a positive integer such that $-rK_X$ is Cartier, 
let $I_M\subset\dots\subset I_1\subset\sO_X$ be a sequence of coherent ideal sheaves, 
let $\sI:=I_M+I_{M-1}t^1+\dots+I_1t^{M-1}+(t^M)\subset\sO_{X\times\A^1_t}$, let 
$\Pi\colon\sX\to X\times\A^1$ be the blowup along $\sI$, let 
$E\subset\sX$ be the Cartier divisor defined by $\sO_\sX(-E)=\sI\cdot\sO_\sX$, and 
let $\sL:=\Pi^*\sO_{X\times\A^1}(-rK_{X\times \A^1/\A^1})\otimes\sO_\sX(-E)$. 
Assume that $\sL$ is semiample over $\A^1$. Then $(\sX, \sL)/\A^1$ is naturally 
seen as a $($possibly non-normal$)$ semi test configuration of $(X, -rK_X)$. 
Under these conditions, the pair 
$(X\times\A^1_t, \sI^{\cdot (1/r)}\cdot (t)^{\cdot d})$ must be sub log canonical, 
where 
\[
d:=1+\frac{(\bar{\sL}^{\cdot n+1})}{(n+1)r^{n+1}((-K_X)^{\cdot n})}. 
\]
Moreover, we have the equality 
\[
(\bar{\sL}^{\cdot n+1})=-\lim_{k\to\infty}\frac{\dim\left(
\frac{H^0(X\times\A^1, \sO_{X\times\A^1}(-krK_{X\times\A^1/\A^1}))}{
H^0(X\times\A^1, \sO_{X\times\A^1}(-krK_{X\times\A^1/\A^1})\cdot\sI^k)}
\right)}{k^{n+1}/(n+1)!}.
\]
\end{proposition}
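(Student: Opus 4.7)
The plan is to apply Ding semistability to a normalization of $(\sX, \sL)$ and to translate the resulting log-canonical-threshold bound through the blowup $\Pi$ down to $X \times \A^1$, then to establish the volume identity by asymptotic Riemann--Roch on $\bar{\sX}$.

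First, I would pass to the normalization $\nu\colon\sX^\nu\to\sX$, obtaining a normal semi test configuration $(\sX^\nu, \nu^*\sL)/\A^1$ with the same compactified self-intersection number. By a standard perturbation trick -- adding a small positive multiple of a relatively ample line bundle to $\nu^*\sL$, invoking Ding semistability of $X$ on the resulting normal test configuration, and letting the perturbation tend to zero using continuity of each term of $\Ding$ in $\sL$ -- one obtains $\Ding(\sX^\nu, \nu^*\sL) \geq 0$, equivalently
\[
\lct\left(\sX^\nu, D_{(\sX^\nu, \nu^*\sL)}; \sX^\nu_0\right) \;\geq\; 1 + \frac{(\bar{\sL}^{\cdot n+1})}{(n+1)r^{n+1}((-K_X)^{\cdot n})} \;=\; d.
\]

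Next, I would identify $D := D_{(\sX^\nu, \nu^*\sL)}$ explicitly. From $\nu^*\sL = (\Pi\nu)^*(-rK_{X\times\A^1/\A^1}) - \nu^*E$ combined with the defining relation $-rD \sim \nu^*\sL + rK_{\sX^\nu/\A^1}$ with $D$ supported on $\sX^\nu_0$, I obtain
\[
D \;=\; \tfrac{1}{r}\nu^*E - K_{\sX^\nu/X\times\A^1}.
\]
On any log resolution $\mu\colon W\to\sX^\nu$ that also resolves $\sI\cdot(t)$ on $X\times\A^1$, a direct discrepancy computation yields, for every prime divisor $F$ on $W$,
\[
a\left(F, \sX^\nu, D + c\,\sX^\nu_0\right) \;=\; \ord_F(K_{W/X\times\A^1}) - \tfrac{1}{r}\ord_F(\sI) - c\cdot\ord_F(t),
\]
which is exactly the discrepancy of $F$ with respect to $(X\times\A^1, \sI^{\cdot 1/r}\cdot(t)^{\cdot c})$. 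Hence sub log canonicity of the two pairs coincides, and specializing $c = d$ delivers the first assertion.

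For the volume identity, I would realize $\bar{\sX}$ as the blowup of $X\times\pr^1$ along the trivial extension of $\sI$ across $X\times\{\infty\}$, so that $\bar{\sL} = \bar{\Pi}^*\bar{M} - E$ with $\bar{M} := -rK_{X\times\pr^1/\pr^1}$. Asymptotic Riemann--Roch on the proper $(n+1)$-dimensional variety $\bar{\sX}$ gives $\chi(\bar{\sX}, k\bar{\sL}) = (\bar{\sL}^{\cdot n+1})k^{n+1}/(n+1)! + O(k^n)$. On the other hand, pushing forward by $\bar{\Pi}$ (with the vanishing of $R^i\bar{\Pi}_*\sO_{\bar{\sX}}(-kE)$ for $k\gg 0$ and the $O(k^n)$ discrepancy between $\sI^k$ and $\overline{\sI^k}$ handled separately), together with the K\"unneth identity $\chi(X\times\pr^1, k\bar{M}) = \chi(X, -krK_X) = O(k^n)$, yields $\chi(\bar{\sX}, k\bar{\sL}) = -C_k + O(k^n)$, where $C_k$ denotes the dimension appearing in the statement. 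Comparing the two expressions gives the identity. The main technical obstacle I anticipate is precisely this push-forward comparison: controlling both the higher-direct-image vanishing for $\sO_{\bar{\sX}}(-kE)$ and the integral-closure discrepancy between $\sI^k$ and $\overline{\sI^k}$ so that both contribute only to the $O(k^n)$ error; by contrast, Part~1 is largely formal given Ding semistability, the normalization, and the explicit form of $D$.
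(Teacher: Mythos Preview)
Your identification of $D_{(\sX^\nu,\nu^*\sL)}$, the translation of the log canonical threshold to $(X\times\A^1,\sI^{\cdot(1/r)}\cdot(t)^{\cdot c})$, and your sketch of the volume identity are all correct and match the paper's argument (the paper simply cites \cite{odk} for the last point).

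The gap is in your first step. Ding semistability of $X$ is, by definition, the inequality $\Ding(\sX,\sL)\geq 0$ only for normal test configurations of $(X,-rK_X)$, i.e.\ with $\sL|_{\alpha^{-1}(\A^1\setminus\{0\})}\simeq -rK_X$ on each fiber. If you perturb $\nu^*\sL$ by $\varepsilon A$ with $A$ relatively ample, the generic fiber becomes $-rK_X+\varepsilon A|_X$, which is in general \emph{not} a multiple of $-K_X$, so the perturbed pair is not a test configuration to which Ding semistability applies. Conversely, forcing $A$ to be trivial on the generic fiber means $A$ is (numerically) supported on $\sX_0^\nu$, and such classes cannot be relatively ample. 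So the perturbation trick, as stated, does not produce an object on which the hypothesis can be invoked.

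The paper's replacement for this step is to pass to the \emph{ample model}: since $\nu^*\sL$ is relatively semiample, set $\sY:=\Proj\bigoplus_{m\geq 0}\alpha_*(\nu^*\sL^{\otimes m})$ with the induced $\phi\colon\sX^\nu\to\sY$; then there is $m\in\Z_{>0}$ and a $\G_m$-linearized $\alpha$-ample $\sM$ on $\sY$ with $\phi^*\sM\simeq\nu^*\sL^{\otimes m}$, so $(\sY,\sM)/\A^1$ is an honest normal test configuration of $(X,-mrK_X)$ and Ding semistability gives $\Ding(\sY,\sM)\geq 0$. One then uses the crepant invariance $\Ding(\sY,\sM)=\Ding(\sX^\nu,\nu^*\sL^{\otimes m})$ (Lemma~\ref{crepant_lem}) together with the trivial scaling $\Ding(\sX^\nu,\nu^*\sL^{\otimes m})=\Ding(\sX^\nu,\nu^*\sL)$ to conclude. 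From this point on your argument and the paper's coincide.
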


\begin{proof}
Let $\nu\colon\sX^\nu\to \sX$ be the normalization. Then 
$\alpha\colon(\sX^\nu, \nu^*\sL)\to\A^1$ is a normal semi test configuration of 
$(X, -rK_X)$. Set 
\[
\sY:=\Proj\bigoplus_{m\geq 0}\alpha_*(\nu^*\sL^{\otimes m})
\]
and let $\phi\colon\sX^\nu\to\sY$ be the natural morphism. Then there exist 
a positive integer $m$ and a line bundle $\sM$ on $\sY$ with a $\G_m$-action 
such that 
$\phi^*\sM$ is $\G_m$-equivariantly isomorphic to $\nu^*\sL^{\otimes m}$ and 
$(\sY, \sM)/\A^1$ is a normal test configuration of $(X, -mrK_X)$. 
Since $X$ is Ding semistable, we have $\Ding(\sY, \sM)\geq 0$. On the other hand, 
by Lemma \ref{crepant_lem}, we have 
$\Ding(\sY, \sM)=\Ding(\sX^\nu, \nu^*\sL^{\otimes m})$. 
Thus we have $\Ding(\sX^\nu, \nu^*\sL)\geq 0$ since 
$\Ding(\sX^\nu, \nu^*\sL^{\otimes m})=\Ding(\sX^\nu, \nu^*\sL)$ holds. 
Note that 
\begin{eqnarray*}
\sO_{\bar{\sX}^\nu}\left(r(K_{\bar{\sX}^\nu/\pr^1}+D_{(\sX^\nu, \nu^*\sL)})\right)
\simeq\nu^*\bar{\sL}^{\otimes(-1)}\\
\simeq\nu^*\sO_{\bar{\sX}}\left(r(\Pi^*K_{X\times\pr^1/\pr^1}+(1/r)E)\right). 
\end{eqnarray*}
Hence, for $c\in \R$, the pair $(\sX^\nu, D_{(\sX^\nu, \nu^*\sL)}+c\sX_0^\nu)$ 
is sub log canonical if and only if the pair 
$(X\times\A^1, \sI^{\cdot (1/r)}\cdot(t)^{\cdot c})$ is sub log canonical. 
Thus we have the equality
\[
\lct(\sX^\nu, D_{(\sX^\nu, \nu^*\sL)}; \sX_0^\nu)
=\lct(X\times\A^1, \sI^{\cdot (1/r)}; (t)).
\]
This implies that the pair $(X\times\A^1, \sI^{\cdot (1/r)}\cdot (t)^{\cdot d})$ is 
sub log canonical. The remaining part is trivial (see \cite[\S 3]{odk} for example). 
\end{proof}

\section{Ding semistability and filtered linear series}\label{ding_subscheme_section}

\subsection{The saturations of filtered linear series}\label{saturation_section}

We recall the definitions in \cite[\S 1]{BC}. 

\begin{definition}[{see \cite[\S 1]{BC}}]\label{filter_dfn}
Let $X$ be a projective variety, $L$ be a big line bundle on $X$, $V_\bullet$ be the 
complete graded linear series of $L$, that is, $V_r:=H^0(X, L^{\otimes r})$ for any 
$r\in \Z_{\geq 0}$. Let $\sF$ be a decreasing, left-continuous $\R$-filtration of the 
graded $\C$-algebra $V_\bullet$. 
\begin{enumerate}
\renewcommand{\theenumi}{\arabic{enumi}}
\renewcommand{\labelenumi}{(\theenumi)}
\item\label{filter_dfn1}
$\sF$ is said to be \emph{multiplicative} if 
\[
\sF^xV_r\otimes_\C\sF^{x'}V_{r'}\to \sF^{x+x'}V_{r+r'}
\]
holds for any $r$, $r'\in\Z_{\geq 0}$ and $x$, $x'\in\R$. 
\item\label{filter_dfn2}
$\sF$ is said to be \emph{linearly bounded} if $e_{\min}(V_\bullet, \sF)$, 
$e_{\max}(V_\bullet, \sF)\in\R$, where
\begin{eqnarray*}
e_{\min}(V_\bullet, \sF) & := & \liminf_{r\to\infty}
\left(\frac{\inf\{x\in\R\,|\,\sF^xV_r\neq V_r\}}{r}\right),\\
e_{\max}(V_\bullet, \sF) & := & \limsup_{r\to\infty}
\left(\frac{\sup\{x\in\R\,|\,\sF^xV_r\neq 0\}}{r}\right).
\end{eqnarray*}
\item\label{filter_dfn3}
Assume that $\sF$ is multiplicative. For any $x\in\R$, we set
\[
\vol(\sF V_\bullet^x):=\limsup_{r\to \infty}\frac{\dim\sF^{rx}V_r}{r^n/n!}, 
\]
where $n:=\dim X$. 
\end{enumerate}
\end{definition}

\begin{definition}\label{sat_dfn}
Let $X$ be a projective variety, $L$ be an ample line bundle on $X$, 
$V_\bullet$ be the complete graded linear series of $L$ and 
$\sF$ be a decreasing, left-continuous, 
multiplicative and linearly bounded $\R$-filtration of $V_\bullet$. 
For any $r\in \Z_{\geq 0}$ and $x\in\R$, we set 
\[
I_{(r,x)}^{\sF}:=I_{(r,x)}:=\Image(\sF^xV_r\otimes_\C L^{\otimes(-r)}\to\sO_X), 
\]
where the homomorphism is the evaluation homomorphism. Moreover, we set
$\bar{\sF}^xV_r:=H^0(X, L^{\otimes r}\cdot I_{(r,x)})$. 
\end{definition}

\begin{proposition}\label{saturation_prop}
Let $X$, $L$, $V_\bullet$ and $\sF$ be as in Definition \ref{sat_dfn}. 
\begin{enumerate}
\renewcommand{\theenumi}{\arabic{enumi}}
\renewcommand{\labelenumi}{(\theenumi)}
\item\label{saturation_prop1}
For any $r$, $r'\in\Z_{\geq 0}$ and $x$, $x'\in\R$, we have 
$I_{(r, x)}\cdot I_{(r', x')}\subset I_{(r+r', x+x')}$. 
\item\label{saturation_prop2}
For any $r\in\Z_{\geq 0}$ and $x\leq x'$, we have $I_{(r, x')}\subset I_{(r, x)}$. 
\item\label{saturation_prop3}
For any $r\in\Z_{\geq 0}$ and $x>r\cdot e_{\max}(V_\bullet, \sF)$, we have 
$\sF^x V_r=0$. In particular, $I_{(r, x)}=0$ holds. 
\item\label{saturation_prop4}
For any $e_-<e_{\min}(V_\bullet, \sF)$, there exists $r_1\in\Z_{>0}$ such that 
$\sF^{re_-}V_r=V_r$ and $I_{(r, re_-)}=\sO_X$ hold for any $r\geq r_1$. 
\item\label{saturation_prop5}
For any $r\in\Z_{\geq 0}$ and $x\in\R$, $\sF^x V_r\subset\bar{\sF}^x V_r$ holds. 
Moreover, the homomorphism $\bar{\sF}^x V_r\otimes_\C\sO_X\to L^{\otimes r}
\cdot I_{(r, x)}$ is surjective. 
\item\label{saturation_prop6}
$\bar{\sF}$ is also a decreasing, left-continuous, 
multiplicative and linearly bounded $\R$-filtration of $V_\bullet$. 
Moreover, we have 
\[
e_{\min}(V_\bullet, \sF)\leq e_{\min}(V_\bullet, \bar{\sF})
\leq e_{\max}(V_\bullet, \bar{\sF})=e_{\max}(V_\bullet, \sF).
\]
Furthermore, for any $r\in\Z_{\geq 0}$ and $x\in\R$, we have 
$I_{(r, x)}^{\sF}=I_{(r, x)}^{\bar{\sF}}$.
\end{enumerate}
\end{proposition}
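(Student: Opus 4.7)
All six assertions are formal unpackings of the definitions, together with one nontrivial geometric input (ampleness of $L$) and the finite-dimensionality of $V_r$. I would dispatch (1)--(5) first in the order listed, then deduce (6) from them.

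Parts (1)--(3) are almost automatic. Claim (1) is the image version of the multiplicativity map $\sF^x V_r \otimes \sF^{x'} V_{r'} \to \sF^{x+x'} V_{r+r'}$. Claim (2) is immediate from the decreasing property of $\sF$. For (3), the section algebra $\bigoplus_r V_r$ is an integral domain because $X$ is a variety, so a nonzero $s \in \sF^x V_r$ yields $s^k \in \sF^{kx} V_{kr} \setminus \{0\}$ for all $k$ by multiplicativity; this forces $x/r \leq e_{\max}(V_\bullet, \sF)$, and the contrapositive gives the claim.

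For (4), the $\liminf$ defining $e_{\min}$ supplies $r_0$ with $\sF^{r e_-} V_r = V_r$ for all $r \geq r_0$, and since $L$ is ample, $L^{\otimes r}$ is globally generated for $r$ large, whence the evaluation $V_r \otimes L^{\otimes -r} \twoheadrightarrow \sO_X$ forces $I_{(r, r e_-)} = \sO_X$ for all sufficiently large $r$. Claim (5) is tautological from the definition of $I_{(r,x)}$ as an image: a section of $\sF^x V_r \subset H^0(X, L^{\otimes r})$ locally factors through $L^{\otimes r} \cdot I_{(r,x)}$, so lies in $\bar{\sF}^x V_r$, and the evaluation $\sF^x V_r \otimes \sO_X \twoheadrightarrow L^{\otimes r} \cdot I_{(r,x)}$ trivially factors through $\bar{\sF}^x V_r \otimes \sO_X$, yielding the stated surjectivity.

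The substance is in (6). Decreasing and multiplicativity of $\bar{\sF}$ follow from (2) and (1) respectively, via the sandwich $\bar{\sF}^x V_r \cdot \bar{\sF}^{x'} V_{r'} \subset H^0(X, L^{\otimes(r+r')} \cdot I_{(r,x)} \cdot I_{(r',x')}) \subset \bar{\sF}^{x+x'} V_{r+r'}$. For left-continuity I would use that $V_r$ is finite-dimensional, so the descending chain $\{\sF^y V_r\}_{y < x}$ stabilizes to $\sF^x V_r$ as $y \uparrow x$ by left-continuity of $\sF$; hence $I_{(r,y)}$ stabilizes to $I_{(r,x)}$, and taking global sections gives $\bigcap_{y<x} \bar{\sF}^y V_r = \bar{\sF}^x V_r$. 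For the bounds, $e_{\max}(\bar{\sF}) \geq e_{\max}(\sF)$ follows from (5), while the reverse uses $\sF^x V_r = 0 \Rightarrow I_{(r,x)} = 0 \Rightarrow \bar{\sF}^x V_r = 0$; and $e_{\min}(\bar{\sF}) \geq e_{\min}(\sF)$ follows by plugging (4) into the definition, giving $\bar{\sF}^{r e_-} V_r = V_r$ for $r \gg 0$ whenever $e_- < e_{\min}(\sF)$. Finally, the surjectivity in (5) is exactly the statement $I^{\bar{\sF}}_{(r,x)} = I^\sF_{(r,x)}$. The only step I expect to require real care is left-continuity of $\bar{\sF}$: one must check that the image ideals $I_{(r,y)}$ stabilize to $I_{(r,x)}$ itself and not merely to some larger ideal, which uses both finite-dimensionality of $V_r$ and left-continuity of $\sF$.
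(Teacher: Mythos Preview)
Your proposal is correct and follows essentially the same formal unpacking as the paper. The one noteworthy difference is in part~(4): you observe directly from the definition of $\liminf$ that $e_-<e_{\min}(V_\bullet,\sF)$ forces $\sF^{re_-}V_r=V_r$ for \emph{all} sufficiently large $r$, then combine this with global generation of $L^{\otimes r}$ for $r\gg 0$; the paper instead picks two large primes $p_1,p_2$ at which $\sF^{p_ie_-}V_{p_i}=V_{p_i}$ and uses multiplicativity together with surjectivity of $V_r\otimes V_{r'}\to V_{r+r'}$ to reach every $r\geq p_1p_2$. Your route is shorter. For part~(6) the paper simply declares left-continuity of $\bar{\sF}$ ``obvious'' where you spell out the stabilization argument via $\dim V_r<\infty$, and the paper deduces $e_{\min}(V_\bullet,\sF)\leq e_{\min}(V_\bullet,\bar{\sF})$ directly from the inclusion in~(5) rather than via~(4); both are fine.
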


\begin{proof}
\eqref{saturation_prop1}
Follows from the diagram
\[\xymatrix{
(\sF^x V_r\otimes_\C\sF^{x'}V_{r'})\otimes_\C L^{\otimes(-(r+r'))} 
\ar@{->>}[r] \ar[d] & I_{(r, x)}\cdot I_{(r', x')} \ar@{^{(}->}[r] & \sO_X \ar@{=}[d] \\
\sF^{x+x'}V_{r+r'}\otimes_\C L^{\otimes(-(r+r'))} \ar@{->>}[r]  & I_{(r+r', x+x')} 
\ar@{^{(}->}[r]& \sO_X.
}\]

\eqref{saturation_prop2}
This is obvious since $\sF$ is decreasing. 

\eqref{saturation_prop3}
(See \cite[Lemma 1.4]{BC}.) By the definition of $e_{\max}(V_\bullet, \sF)$, 
there exists $k\in\Z_{>0}$ such that $\sF^{kx}V_{kr}=0$. Thus $\sF^xV_r=0$ since 
$V_\bullet$ is an integral domain. 

\eqref{saturation_prop4}
By \cite[Example 1.2.22]{L1}, there exists $r_0\in \Z_{>0}$ such that 
the homomorphisms
\begin{eqnarray*}
V_r\otimes_\C V_{r'}\to V_{r+r'},\\
V_r\otimes_\C L^{\otimes (-r)}\to \sO_X
\end{eqnarray*}
are surjective for all $r$, $r'\geq r_0$. 
By the choice of $e_-$, there exist distinct prime numbers $p_1$, $p_2$ with 
$p_1$, $p_2\geq r_0$ such that $\sF^{p_i e_-}V_{p_i}=V_{p_i}$ for $i=1$, $2$. Set 
$r_1:=p_1p_2$. For any $r\geq r_1$, there exist $k_1$, $k_2\in\Z_{\geq 0}$ such that 
$r=k_1p_1+k_2p_2$ holds. Then $\sF^{re_-}V_r$ contains the image of 
$(\sF^{p_1 e_-}V_{p_1})^{\otimes k_1}\otimes_\C(\sF^{p_2 e_-}V_{p_2})^{\otimes k_2}
=V_{p_1}^{\otimes k_1}\otimes_\C V_{p_2}^{\otimes k_2}$. 
Thus $\sF^{re_-}V_r=V_r$ and $I_{(r, re_-)}=\sO_X$ hold. 

\eqref{saturation_prop5}
Consider the diagram
\[\xymatrix{
\sF^x V_r\otimes_\C \sO_X 
\ar@{->>}[r] \ar@{^{(}->}[d] & L^{\otimes r}\cdot I_{(r, x)}\ar@{^{(}->}[d] \\
H^0(X, L^{\otimes r})\otimes_\C\sO_X \ar[r]  & L^{\otimes r}.
}\]
By taking $H^0$, we get $\sF^x V_r\subset\bar{\sF}^x V_r$. From the diagram 
\[\xymatrix{
\sF^x V_r\otimes_\C \sO_X 
\ar@{->>}[r] \ar@{^{(}->}[d] & L^{\otimes r}\cdot I_{(r, x)}\ar@{=}[d] \\
\bar{\sF}^x V_r\otimes_\C \sO_X \ar[r]  & L^{\otimes r}\cdot I_{(r, x)},
}\]
we get the assertion. 

\eqref{saturation_prop6}
From \eqref{saturation_prop2}, $\bar{\sF}$ is decreasing, 
and obviously left-continuous. From \eqref{saturation_prop1}, $\bar{\sF}$ is 
multiplicative. From \eqref{saturation_prop5}, $e_{\min}(V_\bullet, \sF)\leq
e_{\min}(V_\bullet, \bar{\sF})$ and $e_{\max}(V_\bullet, \sF)\leq 
e_{\max}(V_\bullet, \bar{\sF})$ hold. Moreover, from \eqref{saturation_prop3}, 
$e_{\max}(V_\bullet, \sF)\geq e_{\max}(V_\bullet, \bar{\sF})$ holds. 
Thus $\bar{\sF}$ is linearly bounded. Moreover, the condition 
$I_{(r, x)}^{\sF}=I_{(r, x)}^{\bar{\sF}}$ follows from \eqref{saturation_prop5}. 
\end{proof}

\begin{definition}\label{saturation_dfn}
Let $X$, $L$, $V_\bullet$, $\sF$ be as in Definition \ref{sat_dfn}. 
\begin{enumerate}
\renewcommand{\theenumi}{\arabic{enumi}}
\renewcommand{\labelenumi}{(\theenumi)}
\item\label{saturation_dfn1}
The filtration $\bar{\sF}$ of $V_\bullet$ in Definition \ref{sat_dfn} is called 
the \emph{saturation} of $\sF$. 
\item\label{saturation_dfn2}
If $\sF^x V_r=\bar{\sF}^x V_r$ for any $r\in\Z_{\geq 0}$ and $x\in\R$, then we say that 
the filtration $\sF$ is \emph{saturated}. Note that, by Proposition \ref{saturation_prop}, 
for any $\sF$ in Definition \ref{sat_dfn}, the saturation $\bar{\sF}$ is saturated. 
\end{enumerate}
\end{definition}

\subsection{Test configurations from filtered linear series}\label{tc_filt_section}
In this section, we fix 
\begin{itemize}
\item
an $n$-dimensional $\Q$-Fano variety $X$ which is Ding semistable, 
\item
$r_0\in\Z_{>0}$ such that $-r_0K_X$ is Cartier, 
\item
$L:=\sO_X(-r_0K_X)$, 
\item
the complete graded linear series $V_\bullet$ of $L$, 
\item
a decreasing, left-continuous, multiplicative, linearly bounded $\R$-filtration 
$\sF$ of $V_\bullet$, and 
\item
$e_+$, $e_-\in\Z$ with $e_+>e_{\max}(V_\bullet, \sF)$ and 
$e_-<e_{\min}(V_\bullet, \sF)$. 
\end{itemize}

Set $e:=e_+-e_-$. Fix $r_1\in\Z_{>0}$ as in Proposition \ref{saturation_prop} 
\eqref{saturation_prop4}. For any $r\geq r_1$, we set 
\[
\sI_r:=I_{(r, re_+)}+I_{(r, re_+-1)}t^1+\cdots+I_{(r, re_-+1)}t^{re-1}+(t^{re})
\subset\sO_{X\times\A^1_t}.
\]

By Proposition \ref{saturation_prop}, $\{\sI_r\}_{r\geq r_1}$ 
is a graded family of coherent ideal sheaves. 
For any $r\geq r_1$, $k\in \Z_{\geq 0}$ and $j\in[kre_-, kre_+]\cap\Z$, we set 
\[
J_{(k; r, j)}:=\sum_{\substack{j_1+\cdots+j_k=j, \\
j_1,\dots,j_k\in[re_-, re_+]\cap\Z}}
I_{(r, j_1)}\cdots I_{(r, j_k)}. 
\]
By construction, 
\[
\sI_r^k=J_{(k; r, kre_+)}+J_{(k; r, kre_+-1)}t^1+\cdots+J_{(k; r, kre_-+1)}t^{kre-1}+(t^{kre})
\]
holds. Moreover, by Proposition \ref{saturation_prop} \eqref{saturation_prop5}, 
$J_{(k; r, j)}$ is the image of the homomorphism
\[
W_{(k; r, j)}\otimes_\C L^{\otimes(-kr)}\to\sO_X, 
\]
where $W_{(k;r, j)}$ is defined by the image of the homomorphism 
\[
\bigoplus_{\substack{j_1+\cdots+j_k=j, \\
j_1,\dots,j_k\in[re_-, re_+]\cap\Z
}}\bar{\sF}^{j_1}V_r\otimes_\C\cdots\otimes_\C\bar{\sF}^{j_k}V_r\to\bar{\sF}^j V_{kr}. 
\]

\begin{lemma}\label{W_lem}
For any $r\geq r_1$, $k\in\Z_{\geq 0}$ and $j\in[kre_-, kre_+]\cap\Z$, we have the 
following: 
\begin{enumerate}
\renewcommand{\theenumi}{\arabic{enumi}}
\renewcommand{\labelenumi}{(\theenumi)}
\item\label{W_lem1}
$W_{(k; r, j)}\subset H^0(X, L^{\otimes kr}\cdot J_{(k; r, j)})\subset\bar{\sF}^j V_{kr}$ 
holds. 
\item\label{W_lem2}
The homomorphism 
\[
H^0(X, L^{\otimes kr}\cdot J_{(k; r, j)})\otimes_\C\sO_X\to L^{\otimes kr}\cdot 
J_{(k; r, j)}
\]
is surjective. 
\end{enumerate}
\end{lemma}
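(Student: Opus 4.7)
The plan is to reduce both parts to the basic properties of the saturation $\bar{\sF}$ recorded in Proposition~\ref{saturation_prop}, together with the observation (already noted just above the lemma) that $J_{(k;r,j)}$ is the image of $W_{(k;r,j)}\otimes_\C L^{\otimes(-kr)}\to\sO_X$.

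For the first inclusion in \eqref{W_lem1}, I would fix a decomposition $j=j_1+\cdots+j_k$ with each $j_i\in[re_-,re_+]\cap\Z$. By Proposition~\ref{saturation_prop}\eqref{saturation_prop5} applied to the saturated filtration $\bar{\sF}$, every section $s_i\in\bar{\sF}^{j_i}V_r$ is a global section of $L^{\otimes r}\cdot I_{(r,j_i)}$. Hence the product $s_1\cdots s_k$ is a global section of $L^{\otimes kr}\cdot I_{(r,j_1)}\cdots I_{(r,j_k)}$, which is contained in $L^{\otimes kr}\cdot J_{(k;r,j)}$. Since $W_{(k;r,j)}$ is by definition the image of the direct sum of all such tensor products, summing over the admissible decompositions gives $W_{(k;r,j)}\subset H^0(X,L^{\otimes kr}\cdot J_{(k;r,j)})$. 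For the second inclusion I would iterate Proposition~\ref{saturation_prop}\eqref{saturation_prop1} to conclude $I_{(r,j_1)}\cdots I_{(r,j_k)}\subset I_{(kr,j_1+\cdots+j_k)}=I_{(kr,j)}$ for every decomposition, and therefore $J_{(k;r,j)}\subset I_{(kr,j)}$. Then $H^0(X,L^{\otimes kr}\cdot J_{(k;r,j)})\subset H^0(X,L^{\otimes kr}\cdot I_{(kr,j)})=\bar{\sF}^jV_{kr}$, by the very definition of $\bar{\sF}$.

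For \eqref{W_lem2}, I would invoke the formula recalled above the lemma: $J_{(k;r,j)}$ equals the image of $W_{(k;r,j)}\otimes_\C L^{\otimes(-kr)}\to\sO_X$. Twisting by $L^{\otimes kr}$, this means that $W_{(k;r,j)}\otimes_\C\sO_X$ already surjects onto $L^{\otimes kr}\cdot J_{(k;r,j)}$. Because \eqref{W_lem1} puts $W_{(k;r,j)}$ inside $H^0(X,L^{\otimes kr}\cdot J_{(k;r,j)})$, the canonical evaluation map from the larger space is a fortiori surjective.

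Both steps are really diagram chases combining \eqref{saturation_prop1}, \eqref{saturation_prop5} and the definition of $W_{(k;r,j)}$, so I do not expect a serious obstacle. The only point requiring care is keeping track of the distinction between $\sF$ and $\bar{\sF}$, but Proposition~\ref{saturation_prop}\eqref{saturation_prop6} ensures $I_{(r,x)}^{\sF}=I_{(r,x)}^{\bar{\sF}}$, so at the level of the ideals involved the two filtrations are interchangeable and the argument goes through cleanly.
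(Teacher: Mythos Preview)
Your proposal is correct and follows essentially the same route as the paper's proof: both use that $J_{(k;r,j)}$ is the image of $W_{(k;r,j)}\otimes_\C L^{\otimes(-kr)}\to\sO_X$ to obtain the first inclusion and part \eqref{W_lem2}, and both deduce $J_{(k;r,j)}\subset I_{(kr,j)}$ (via Proposition~\ref{saturation_prop}\eqref{saturation_prop1}) for the second inclusion. The paper packages these observations as short commutative diagrams while you spell them out in words, but the content is the same.
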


\begin{proof}
From the homomorphism 
\[
W_{(k; r, j)}\otimes_\C\sO_X\to L^{\otimes kr}\cdot J_{(k; r, j)}, 
\]
we get the inclusion $W_{(k; r, j)}\subset H^0(X, L^{\otimes kr}\cdot J_{(k; r, j)})$. 
Furthermore, from the diagram
\[\xymatrix{
W_{(k; r, j)}\otimes_\C \sO_X 
\ar@{->>}[r] \ar@{^{(}->}[d] & L^{\otimes kr}\cdot J_{(k;r,j)}\ar@{=}[d] \\
H^0(X, L^{\otimes kr}\cdot J_{(k; r, j)})\otimes_\C \sO_X \ar[r] & 
L^{\otimes kr}\cdot J_{(k;r,j)},
}\]
we have proved \eqref{W_lem2}. Moreover, from the diagram
\[\xymatrix{
W_{(k; r, j)}\otimes_\C L^{\otimes(-kr)} 
\ar@{->>}[r] \ar@{^{(}->}[d] & J_{(k;r,j)} \ar@{^{(}->}[r] & \sO_X \ar@{=}[d] \\
\bar{\sF}^j V_{kr}\otimes_\C L^{\otimes(-kr)} \ar@{->>}[r]  & I_{(kr, j)} 
\ar@{^{(}->}[r]& \sO_X,
}\]
we have $J_{(k; r, j)}\subset I_{(kr, j)}$. Thus we have proved \eqref{W_lem1}. 
\end{proof}

For any $r\geq r_1$, let 
\begin{itemize}
\item
$\Pi_r\colon\sX_r\to X\times\A^1$ be the blowup along $\sI_r$, 
\item
$E_r\subset\sX_r$ be the Cartier divisor defined by 
$\sO_{\sX_r}(-E_r)=\sI_r\cdot\sO_{\sX_r}$, and
\item
$\sL_r:=\Pi_r^*\sO_{X\times\A^1}(-rr_0K_{X\times\A^1/\A^1})\otimes
\sO_{\sX_r}(-E_r)$.
\end{itemize}

\begin{lemma}\label{stc_lemma}
$\sL_r$ is semiample over $\A^1$. 
Thus $(\sX_r, \sL_r)/\A^1$ is a 
semi test configuration of $(X, -rr_0K_X)$. 
\end{lemma}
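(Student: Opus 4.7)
The plan is to realise $\sL_r$ as a quotient of the pullback of an explicit sheaf on $X\times\A^1$ and then produce enough of its global sections via the saturation property in Proposition \ref{saturation_prop}\eqref{saturation_prop5}. Since $\A^1$ is affine, relative semiampleness over $\A^1$ is equivalent to global generation of $\sL_r$ on $\sX_r$, which is what I will actually establish.

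Using the identification $\sO_{X\times\A^1}(-rr_0K_{X\times\A^1/\A^1})=p_X^*L^{\otimes r}$, where $p_X\colon X\times\A^1\to X$ is the first projection, the universal property of the blowup yields a natural surjection
\[
\Pi_r^*(p_X^*L^{\otimes r}\otimes\sI_r)\twoheadrightarrow\sL_r,
\]
so it suffices to show that $p_X^*L^{\otimes r}\otimes\sI_r$ is globally generated on $X\times\A^1$. For each $j\in\{0,1,\ldots,re\}$ and each $s\in\bar{\sF}^{re_+-j}V_r=H^0(X,L^{\otimes r}\cdot I_{(r,re_+-j)})$, the element $t^j\cdot p_X^*s$ is by construction a global section of $p_X^*L^{\otimes r}\otimes\sI_r$. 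For the slot $j=re$ I use that $I_{(r,re_-)}=\sO_X$ and $\bar{\sF}^{re_-}V_r=V_r$ whenever $r\geq r_1$, which is Proposition \ref{saturation_prop}\eqref{saturation_prop4}; the proof given there also shows that one may arrange $L^{\otimes r}$ itself to be globally generated for all $r\geq r_1$, which I will freely assume.

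The main check is that these sections generate every stalk of $p_X^*L^{\otimes r}\otimes\sI_r$. Over $X\times(\A^1\setminus\{0\})$ the ideal $\sI_r$ is the unit ideal because it contains $t^{re}$, and the sections $t^{re}\cdot p_X^*s$ for $s\in V_r$ then generate since $L^{\otimes r}$ is globally generated. At a point $(x_0,0)$ on the central fibre, Proposition \ref{saturation_prop}\eqref{saturation_prop5} supplies, for each $j$, a surjection $\bar{\sF}^{re_+-j}V_r\otimes_\C\sO_X\twoheadrightarrow L^{\otimes r}\cdot I_{(r,re_+-j)}$; multiplying by $t^j$ shows that the germs of the sections $t^j\cdot p_X^*s$ at $(x_0,0)$ generate the $j$-th $t$-adic summand $L^{\otimes r}_{x_0}\cdot I_{(r,re_+-j),x_0}\cdot t^j$ as an $\sO_{X\times\A^1,(x_0,0)}$-module, and summing over $j\in[0,re]$ recovers the whole stalk by the very definition of $\sI_r$.

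To upgrade this to a semi test configuration I observe that $\sI_r$ is $\G_m$-invariant for the action of weight one on $t$, since each summand $I_{(r,re_+-j)}t^j$ is a weight-$j$ eigenspace; consequently the blowup $\sX_r$, the Cartier divisor $E_r$, and the line bundle $\sL_r$ all carry canonical $\G_m$-equivariant structures compatible with $\alpha\colon\sX_r\to\A^1$. The identity $\sI_r|_{X\times(\A^1\setminus\{0\})}=\sO$ then yields the required equivariant isomorphism of $(\sX_r,\sL_r)$ over $\A^1\setminus\{0\}$ with $(X,L^{\otimes r})\times(\A^1\setminus\{0\})$ with its natural $\G_m$-action. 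I expect the central-fibre stalk generation to be the only substantive step; everything else is formal bookkeeping that follows from the structure of $\sI_r$ and the filtration.
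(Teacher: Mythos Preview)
Your argument is correct and follows essentially the same route as the paper: in both cases the crux is the surjection $\bar{\sF}^{x}V_r\otimes_\C\sO_X\twoheadrightarrow L^{\otimes r}\cdot I_{(r,x)}$ from Proposition \ref{saturation_prop}\eqref{saturation_prop5} (equivalently Lemma \ref{W_lem}\eqref{W_lem2} with $k=1$), together with the blowup surjection $\Pi_r^*\sI_r\twoheadrightarrow\sO_{\sX_r}(-E_r)$. The only real difference is packaging: the paper computes $\alpha_*\sL_r^{\otimes k}$ via \cite[Lemma 5.4.24]{L1} (which forces the passage to $k\gg 0$) and then shows $\alpha^*\alpha_*\sL_r^{\otimes k}\to\sL_r^{\otimes k}$ is surjective, whereas you bypass this by proving directly that $p_X^*L^{\otimes r}\otimes\sI_r$ is globally generated and then pulling back. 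Your version is a bit more streamlined and avoids the inessential ``$k\gg 0$''; one small wording quibble is that global generation of $\sL_r$ \emph{implies} relative semiampleness rather than being equivalent to it, but that is the direction you need.
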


\begin{proof}
(See also \cite[Lemma 3.4]{fjt1}.)
Let $\alpha\colon\sX_r\to \A^1$ and $p_2\colon X\times\A^1\to \A^1$ be 
the natural morphisms. 
For any $k\in\Z_{\geq 0}$, by Lemma \ref{W_lem} \eqref{W_lem2}, we have
\begin{eqnarray*}
&&H^0(X\times\A^1, \sO_{X\times\A^1}(-krr_0K_{X\times\A^1/\A^1})\cdot\sI_r^k)
\otimes_{\C[t]}\sO_{X\times\A^1}\\
&=&\Biggl(\sum_{j=0}^{kre-1}t^j\cdot 
H^0(X, L^{\otimes kr}\cdot J_{(k; r, kre_+-j)})\\
&&+\sum_{j\geq kre}t^j\cdot H^0(X, L^{\otimes kr})\Biggr)
\otimes_{\C[t]}\sO_{X\times\A^1}\\
&\twoheadrightarrow&\sum_{j=0}^{kre-1}t^j\cdot L^{\otimes kr}\cdot 
J_{(k; r, kre_+-j)}+\sum_{j\geq kre}t^j\cdot L^{\otimes kr}\\
&=&\sO_{X\times\A^1}(-krr_0 K_{X\times\A^1/\A^1})\cdot\sI_r^k.
\end{eqnarray*}
Therefore, by \cite[Lemma 5.4.24]{L1}, for any $k\gg 0$, we have
\begin{eqnarray*}
&&\alpha^*\alpha_*\sL_r^{\otimes k}\\
&=&\Pi_r^*(p_2)^*(p_2)_*\left(
\sO_{X\times\A^1}(-krr_0 K_{X\times\A^1/\A^1})\cdot\sI_r^k\right)\\
&=&\Pi_r^*\left(H^0(X\times\A^1, 
\sO_{X\times\A^1}(-krr_0 K_{X\times\A^1/\A^1})\cdot\sI_r^k)\otimes_{\C[t]}
\sO_{X\times\A^1}\right)\\
&\twoheadrightarrow& \Pi_r^*\left(
\sO_{X\times\A^1}(-krr_0 K_{X\times\A^1/\A^1})\cdot\sI_r^k\right)\\
&\twoheadrightarrow& \Pi_r^*\sO_{X\times\A^1}(-krr_0K_{X\times\A^1/\A^1})
\otimes\sO_{\sX_r}(-kE_r)=\sL_r^{\otimes k}.
\end{eqnarray*}
Thus $\sL_r$ is semiample over $\A^1$. 
\end{proof}

Thus, by Proposition \ref{ding_flag_prop}, the pair 
$(X\times\A^1, \sI_r^{\cdot (1/(rr_0))}\cdot (t)^{\cdot d_r})$ is 
sub log canonical, where 
\[
d_r:=1+\frac{(\bar{\sL}_r^{\cdot n+1})}{(n+1)r^{n+1}r_0^{n+1}((-K_X)^{\cdot n})}.
\]
Set
\[
w_r(k):=-\dim\left(\frac{
H^0(X\times\A^1, \sO_{X\times\A^1}(-krr_0K_{X\times\A^1/\A^1}))}{
H^0(X\times\A^1, \sO_{X\times\A^1}(-krr_0K_{X\times\A^1/\A^1})\cdot\sI_r^k)}
\right).
\]
Then 
\[
(\bar{\sL}_r^{\cdot n+1})=\lim_{k\to\infty}\frac{w_r(k)}{k^{n+1}/(n+1)!}
\]
holds by Proposition \ref{ding_flag_prop}.
We set 
\begin{eqnarray*}
v_r(k)&:=&\sum_{j=kre_-+1}^{kre_+}h^0(X, L^{\otimes kr}
\cdot J_{(k; r, j)}), \\
A_r&:=&\lim_{k\to\infty}\frac{v_r(k)}
{k^{n+1}r^{n+1}r_0^{n+1}/n!}.
\end{eqnarray*}
Since $w_r(k)=-kre\cdot 
h^0(X, L^{\otimes kr})+v_r(k)$, 
the limit in the definition of $A_r$ actually exists. Note that 
$d_r=1-e/r_0+A_r/((-K_X)^{\cdot n})$. 

\begin{lemma}[{cf.\ \cite[Theorem 1.14]{BC}}]\label{limit_lem}
We have
\[
\lim_{r\to\infty}A_r=\frac{1}{r_0^{n+1}}\int_{e_-}^{e_+}\vol(\bar{\sF}V_\bullet^x)dx.
\]
\end{lemma}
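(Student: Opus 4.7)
The plan is to sandwich $v_r(k)$ between two quantities that, after the limit $k\to\infty$, converge to Riemann-sum approximations of the target integral, and then take $r\to\infty$. Both bounds come from the two inclusions
\[
W_{(k;r,j)}\subset H^0(X,L^{\otimes kr}\cdot J_{(k;r,j)})\subset \bar{\sF}^jV_{kr}
\]
provided by Lemma~\ref{W_lem}(1), so the heart of the argument is to compare the outer terms of this sandwich with $\dim\bar{\sF}^jV_{kr}$ in the joint large-$k$ and large-$r$ asymptotics.

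For the upper bound, the right-hand inclusion gives $v_r(k)\leq\sum_{j=kre_-+1}^{kre_+}\dim\bar{\sF}^jV_{kr}$. Setting $x_j:=j/(kr)$ with mesh $1/(kr)$, the right-hand side divided by $(kr)^{n+1}/n!$ is a Riemann sum for $x\mapsto\vol(\bar{\sF}V_\bullet^x)$ on $(e_-,e_+]$. Using the pointwise convergence implicit in the definition of $\vol(\bar{\sF}V_\bullet^x)$ together with the uniform bound $\dim\bar{\sF}^jV_{kr}\leq\dim V_{kr}$ secured by the linear boundedness of $\bar{\sF}$ from Proposition~\ref{saturation_prop}(6), dominated convergence lets me pass to the limit $k\to\infty$ and obtain the uniform estimate
\[
A_r\leq\frac{1}{r_0^{n+1}}\int_{e_-}^{e_+}\vol(\bar{\sF}V_\bullet^x)\,dx\qquad(r\geq r_1).
\]

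For the matching lower bound I would use the left-hand inclusion, so that $v_r(k)\geq\sum_j\dim W_{(k;r,j)}$. By construction the $W_{(k;r,j)}$ are the graded pieces of the image of the $k$-fold multiplication map between the $\bar{\sF}$-filtered pieces of $V_r$; they therefore define a multiplicative, linearly bounded $\R$-filtration $\sG_r$ on the subalgebra of $V_\bullet$ generated by $V_r$. Repeating the Riemann-sum analysis for $\sG_r$ in place of $\bar{\sF}$ reduces the lower bound to the inequality
\[
\liminf_{r\to\infty}\int_{e_-}^{e_+}\vol(\sG_r V_\bullet^x)\,dx\geq\int_{e_-}^{e_+}\vol(\bar{\sF}V_\bullet^x)\,dx,
\]
which is precisely the Fujita-type approximation recorded as \cite[Theorem 1.14]{BC}: since $\bar{\sF}$ is saturated and multiplicative, its Newton--Okounkov concave transform is the monotone limit of the concave transforms of the subfiltrations $\sG_r$, and the volumes converge accordingly.

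The main obstacle is this last step. For small $r$ the sub-filtration $\sG_r$ can be strictly smaller than $\bar{\sF}$, and there is no elementary comparison between individual graded pieces $W_{(k;r,j)}$ and $\bar{\sF}^jV_{kr}$; the force of the argument rests entirely on the Okounkov-body and concave-transform formalism of \cite{BC} to guarantee that $\sG_r$ recovers all of $\bar{\sF}$ asymptotically as $r\to\infty$. Once this approximation is granted, combining the two bounds together with the finiteness of $e_\pm$ gives the stated limit.
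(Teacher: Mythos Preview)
Your proposal is correct and follows essentially the same route as the paper: sandwich $v_r(k)$ using the two inclusions of Lemma~\ref{W_lem}\eqref{W_lem1}, pass to $k\to\infty$ via dominated convergence to get Riemann-type integrals, and then invoke the Okounkov-body/Fujita-approximation machinery of \cite{BC} and \cite{LM} to show the lower bound catches up with the upper bound as $r\to\infty$. The only minor differences are bookkeeping: the paper defines its auxiliary filtration $\sG$ with real-indexed sums (introducing a harmless shift $\sG^{j+k-1}W_{r,k}\subset W_{(k;r,j)}$ that costs $O(1/r)$ in the integral), and the role of saturation is in the \emph{upper} inclusion $H^0(X,L^{\otimes kr}\cdot J_{(k;r,j)})\subset\bar{\sF}^jV_{kr}$ rather than in the Fujita-approximation step you flagged.
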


\begin{proof}
Take any $r\geq r_1$. For $k\in\Z_{\geq 0}$, set
\[
W_{r, k}:=\Image(V_r^{\otimes k}\to V_{kr})=V_{kr}.
\]
Moreover, we consider the $\R$-filtration $\sG$ of the complete graded linear series 
$W_{r, \bullet}$ of $L^{\otimes rr_0}$, 
where $\sG^x W_{r, k}$ is defined by the image of the homomorphism
\[
\sum_{\substack{x_1+\cdots+x_k=x, \\
x_1,\dots,x_k\in\R}}\bar{\sF}^{x_1}V_r\otimes_\C\cdots\otimes_\C\bar{\sF}^{x_k}V_r
\to\bar{\sF}^x V_{kr}.
\]

\begin{claim}\label{G_claim}
\begin{enumerate}
\renewcommand{\theenumi}{\arabic{enumi}}
\renewcommand{\labelenumi}{(\theenumi)}
\item\label{G_claim1}
$\sG$ is a decreasing, left-continuous, multiplicative, linearly bounded 
$\R$-filtration of $W_{r, \bullet}$.
\item\label{G_claim2}
We have
\[
re_-\leq e_{\min}(W_{r, \bullet}, \sG)
\leq e_{\max}(W_{r, \bullet}, \sG)\leq re_+.
\]
\item\label{G_claim3}
For any $k\in\Z_{\geq 0}$ and $j\in[kre_-, kre_+]\cap\Z$, we have
\[
\sG^{j+k-1} W_{r, k}\subset H^0(X, L^{\otimes kr}\cdot J_{(k; r, j)})\subset
\bar{\sF}^j V_{kr}.
\]
\end{enumerate}
\end{claim}

\begin{proof}[Proof of Claim \ref{G_claim}]
\eqref{G_claim1}
We check that $\sG$ is left-continuous. For any $i\in[1, \dim V_r]\cap\Z$ and 
$k\in\Z_{\geq 0}$, we set
\begin{eqnarray*}
e_{r, i} & := & \sup\{x\in\R\,|\, \dim\bar{\sF}^x V_r\geq i\}, \\
\E_{r,k} & := & \left\{\sum_{i=1}^k e_{r, j_i}\, \bigg| \, 
j_1,\dots,j_k\in[1,\dim V_r]\cap\Z\right\}\subset\R.
\end{eqnarray*}
Moreover, we set $e_{r, 0}:=+\infty$ and $e_{r, \dim V_r +1}:=-\infty$ for convenience. 
Take any $x\in\R$. Then 
$x\not\in\{e'+\varepsilon'\,|\,e'\in\E_{r,k}$, $\varepsilon'\in(0, \varepsilon]\}$ holds
for any $0<\varepsilon\ll 1$. Take such $\varepsilon$. It is enough to show 
$\sG^{x-\varepsilon}W_{r, k}\subset \sG^x W_{r, k}$ for proving that $\sG$ is 
left-continuous. Pick any $x'_1,\dots, x'_k\in\R$ with $x'_1+\cdots+x'_k=x-\varepsilon$. 
For any $1\leq i\leq k$, there exists a unique $0\leq j_i\leq \dim V_r$ such that 
$x'_i\in(e_{r, j_i+1}, e_{r, j_i}]$. By the choice of $\varepsilon$, we have 
$\sum_{i=1}^k(e_{r, j_i}-x'_i)\geq\varepsilon$. Thus there exist $x_1,\dots, x_k\in\R$ 
such that $x_1+\cdots+x_k=x$ and $x_i\in(e_{r, j_i+1}, e_{r, j_i}]$ for any $1\leq i\leq k$. 
Since $\bar{\sF}^{x'_1}V_r\otimes_\C\cdots\otimes_\C\bar{\sF}^{x'_k}V_r=
\bar{\sF}^{x_1}V_r\otimes_\C\cdots\otimes_\C\bar{\sF}^{x_k}V_r$, we get 
$\sG^{x-\varepsilon}W_{r, k}\subset \sG^x W_{r, k}$. The remaining assertions are trivial. 

\eqref{G_claim2}
Pick any $k\in\Z_{\geq 0}$. For any $x< kre_-$, we have 
$\bar{\sF}^{x/k}V_r=V_r$. Thus $\sG^x W_{r, k}=W_{r, k}$ and this implies that 
$re_-\leq e_{\min}(W_{r, \bullet}, \sG)$. 
For any $x> kre_+$ and for any $x_1,\dots,x_k\in\R$ 
with $x_1+\cdots+x_k=x$, there exists $1\leq i\leq k$ such that 
$x_i>re_+$. Thus 
$\bar{\sF}^{x_1}V_r\otimes_\C\cdots\otimes_\C\bar{\sF}^{x_k}V_r=0$ and 
this implies that 
$e_{\max}(W_{r, \bullet}, \sG)\leq re_+$.

\eqref{G_claim3}
By Lemma \ref{W_lem} \eqref{W_lem1}, it is enough to show that $\sG^{j+k-1}W_{r, k}
\subset W_{(k; r, j)}$. Take any $x_1,\dots, x_k\in\R$ with $x_1+\cdots+x_k=j+k-1$. 
Then $\lfloor x_1\rfloor+\cdots+\lfloor x_k\rfloor\geq j$.
Thus the image of $\bar{\sF}^{x_1}V_r\otimes_\C\cdots\otimes_\C\bar{\sF}^{x_k}V_r$ is 
contained in $W_{(k; r, j)}$.
\end{proof}

By Claim \ref{G_claim} \eqref{G_claim3}, we get 
\begin{eqnarray*}
\int_{e_-+1/r}^{e_++1/r}\frac{\dim\sG^{krx}W_{r, k}}{k^nr^n/n!}dx
&\leq&
\sum_{j=kre_-+1}^{kre_+}
\frac{h^0(X, L^{\otimes kr}\cdot J_{(k; r, j)})}{k^{n+1}r^{n+1}/n!}\\
&\leq&
\int_{e_-}^{e_+}\frac{\dim\bar{\sF}^{krx}V_{kr}}{k^nr^n/n!}dx. 
\end{eqnarray*}
We note that both $\dim\bar{\sF}^{krx}V_{kr}$ and $\dim\sG^{krx+k}W_{r,k}$ are 
Lebesgue measurable on $x\in[e_-, e_+]$ since both are monotone decreasing functions. 
For any $x\in[e_-, e_+]\setminus\{e_{\max}(V_\bullet, \bar{\sF})\}$, the limit
\[
\lim_{k\to\infty}\frac{\dim\bar{\sF}^{kx}V_k}{k^n/n!}
\]
exists by \cite[Lemma 1.6]{BC}, \cite[Theorem 2.13]{LM} and Proposition 
\ref{saturation_prop} \eqref{saturation_prop3}. Hence, for any $r\geq r_1$, we have
\[
\lim_{k\to\infty}\frac{\dim\bar{\sF}^{krx}V_{kr}}{k^nr^n/n!}=\vol(\bar{\sF}V_\bullet^x).
\]
From dominated convergence, we have
\[
\lim_{k\to\infty}\int_{e_-}^{e_+}
\frac{\dim\bar{\sF}^{krx}V_{kr}}{k^nr^n/n!}dx=
\int_{e_-}^{e_+}\vol(\bar{\sF}V_\bullet^x)dx.
\]
By the same argument, the limit 
\[
\lim_{k\to\infty}\frac{\dim\sG^{krx}W_{r, k}}{k^nr^n/n!}
\]
exists for any $x\in[e_-, e_+]\setminus\{r^{-1}\cdot e_{\max}(W_{r, \bullet}, \sG)\}$ and 
\[
\lim_{k\to\infty}\int_{e_-+1/r}^{e_++1/r}
\frac{\dim\sG^{krx}W_{r, k}}{k^nr^n/n!}dx=
\int_{e_-}^{e_+}\frac{\vol(\sG W_{r, \bullet}^{rx})}{r^n}dx-\frac{r_0^n((-K_X)^{\cdot n})}{r}
\]
holds since we have
\[
\int_{e_-+1/r}^{e_++1/r}\frac{\dim\sG^{krx}W_{r, k}}{k^nr^n/n!}dx=
\int_{e_-}^{e_+}\frac{\dim\sG^{krx}W_{r, k}}{k^nr^n/n!}dx
-\frac{h^0(X, L^{\otimes kr})}{k^nr^{n+1}/n!}.
\]
Thus we get
\[
\int_{e_-}^{e_+}\frac{\vol(\sG W_{r, \bullet}^{rx})}{r^n}dx-
\frac{r_0^n((-K_X)^{\cdot n})}{r} \leq r_0^{n+1}A_r \leq
\int_{e_-}^{e_+}\vol(\bar{\sF}V_\bullet^x)dx.
\]

By \cite[Lemma 1.6]{BC} and \cite[Theorem 3.5]{LM}, for any $x\in[e_-, e_+]\setminus
\{e_{\max}(V_\bullet, \bar{\sF})\}$, we have
\[
\lim_{r\to\infty}\frac{\vol(\sG W_{r, \bullet}^{rx})}{r^n}=\vol(\bar{\sF}V_\bullet^x).
\]
Again by dominated convergence, we have
\[
\lim_{r\to\infty}\int_{e_-}^{e_+}\frac{\vol(\sG W_{r, \bullet}^{rx})}{r^n}dx
=\int_{e_-}^{e_+}\vol(\bar{\sF}V_\bullet^x)dx. 
\]
Therefore the limit $\lim_{r\to\infty}A_r$ exists and is equal to the right-hand side 
of Lemma \ref{limit_lem}. 
\end{proof}

By Lemmas \ref{gr_lc_lem} \eqref{gr_lc_lem1} and \ref{limit_lem}, the pair 
$(X\times\A^1, \sI_\bullet^{\cdot (1/r_0)}\cdot(t)^{\cdot d_\infty})$ is 
sub log canonical, where
\[
d_\infty:=1-\frac{e}{r_0}+\frac{1}{r_0^{n+1}((-K_X)^{\cdot n})}\int_{e_-}^{e_+}
\vol(\bar{\sF}V_\bullet^x)dx.
\]
Consequently, we have proved the following: 

\begin{thm}\label{filt_thm}
Let $X$, $r_0$, $L$, $V_\bullet$, $\sF$, $e_+$, $e_-$ be as in the beginning of Section 
\ref{tc_filt_section}. Then the pair $(X\times\A^1, 
\sI_\bullet^{\cdot (1/r_0)}\cdot(t)^{\cdot d_\infty})$ is 
sub log canonical, where
\begin{eqnarray*}
\sI_r&=&I_{(r, re_+)}^\sF+I_{(r, re_+-1)}^\sF t^1+\cdots+
I_{(r, re_-+1)}^\sF t^{r(e_+-e_-)-1}+(t^{r(e_+-e_-)}), \\
d_\infty&=&1-\frac{e_+-e_-}{r_0}+\frac{1}{r_0^{n+1}((-K_X)^{\cdot n})}\int_{e_-}^{e_+}
\vol(\bar{\sF}V_\bullet^x)dx.
\end{eqnarray*}
\end{thm}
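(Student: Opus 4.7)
The plan is to assemble Theorem \ref{filt_thm} from the machinery developed in Section \ref{tc_filt_section}: for each $r\geq r_1$ I would build a (possibly non-normal) semi test configuration of $(X,-rr_0K_X)$ attached to the flag-type ideal $\sI_r$, feed it into Proposition \ref{ding_flag_prop} via Ding semistability to obtain a sub log canonical statement at level $r$, and then pass to the limit $r\to\infty$ using Lemma \ref{gr_lc_lem}\eqref{gr_lc_lem1}. The saturated nature of $\bar{\sF}$ is what allows the flag ideals to glue into a graded family on $X\times\A^1$.

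First I would fix, for each $r\geq r_1$, the ideal
\[
\sI_r = I_{(r,re_+)}^{\sF} + I_{(r,re_+-1)}^{\sF} t + \cdots + I_{(r,re_-+1)}^{\sF} t^{re-1} + (t^{re})
\]
with $e = e_+ - e_-$. Proposition \ref{saturation_prop}\eqref{saturation_prop1} together with the containments in \eqref{saturation_prop2} and \eqref{saturation_prop4} makes $\{\sI_r\}_{r\geq r_1}$ a graded family. Blow up $X\times\A^1$ along $\sI_r$ to get $\Pi_r\colon\sX_r\to X\times\A^1$ with exceptional Cartier divisor $E_r$, and set $\sL_r := \Pi_r^*\sO_{X\times\A^1}(-rr_0 K_{X\times\A^1/\A^1})\otimes\sO_{\sX_r}(-E_r)$. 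Lemma \ref{stc_lemma} then provides $\alpha$-semiampleness of $\sL_r$, so $(\sX_r,\sL_r)/\A^1$ is a semi test configuration of $(X,-rr_0K_X)$; the essential ingredient here is Lemma \ref{W_lem}\eqref{W_lem2}, which says the pieces $J_{(k;r,j)}$ of $\sI_r^k$ are globally generated on $X$ by sections of $L^{\otimes kr}\cdot J_{(k;r,j)}$.

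Applying Proposition \ref{ding_flag_prop} (which is where Ding semistability of $X$ enters) to each $(\sX_r,\sL_r)/\A^1$ yields sub log canonicity of $(X\times\A^1,\sI_r^{\cdot(1/(rr_0))}\cdot(t)^{\cdot d_r})$ with
\[
d_r = 1 + \frac{(\bar{\sL}_r^{\cdot n+1})}{(n+1) r^{n+1} r_0^{n+1}((-K_X)^{\cdot n})}.
\]
Using the asymptotic formula in Proposition \ref{ding_flag_prop} and the decomposition $w_r(k) = -kre\cdot h^0(X,L^{\otimes kr}) + v_r(k)$ with $v_r(k) = \sum_j h^0(X, L^{\otimes kr}\cdot J_{(k;r,j)})$, I would rewrite $d_r = 1 - e/r_0 + A_r/((-K_X)^{\cdot n})$, where $A_r$ is the rescaled limit of $v_r(k)$ in $k$.

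The core analytic step is Lemma \ref{limit_lem}, that is, $\lim_{r\to\infty} A_r = r_0^{-(n+1)}\int_{e_-}^{e_+}\vol(\bar{\sF}V_\bullet^x)\,dx$. My plan is to introduce the auxiliary multiplicative filtration $\sG$ on the subalgebra $W_{r,\bullet}\subset V_{r\bullet}$ generated by $V_r$, and sandwich $v_r(k)/(k^{n+1}r^{n+1}/n!)$ between integrals of $\dim\bar{\sF}^{krx}V_{kr}$ and integrals of $\dim\sG^{krx}W_{r,k}$ over $x\in[e_-,e_+]$ (shifted by $1/r$ on one side). Sending $k\to\infty$ with dominated convergence and the existence of $\vol(\bar{\sF}V_\bullet^x)$ (Boucksom--Chen \cite{BC}, Lazarsfeld--Mustata \cite{LM}), then sending $r\to\infty$ using $\vol(\sG W_{r,\bullet}^{rx})/r^n \to \vol(\bar{\sF}V_\bullet^x)$, gives the claimed equality; the linear boundedness of $\sF$ and of $\sG$ (Claim \ref{G_claim}\eqref{G_claim2}) furnishes the uniform $L^\infty$ bound needed for dominated convergence. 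With $d_r\to d_\infty$ established, Lemma \ref{gr_lc_lem}\eqref{gr_lc_lem1} applied to the graded family $\{\sI_r\}$ with weight $1/r_0$ and to the ideal $(t)$ with weights $d_r\to d_\infty$ gives the theorem.

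The hardest part is this limit lemma: the subalgebra $W_{r,\bullet}$ is not the full graded linear series, so one must carefully control how the multiplicative filtration $\sG$ generated by $\bar{\sF}^\bullet V_r$ approximates $\bar{\sF}$ on $V_{kr}$ as $r\to\infty$. The tight sandwich, combined with BC/LM asymptotics on the two ends, is what drives everything; the remaining steps are bookkeeping about saturated filtrations, flag blowups, and the semiampleness criterion.
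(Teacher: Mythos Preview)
Your proposal is correct and follows essentially the same route as the paper: build the graded family $\{\sI_r\}$ from the filtration, use Lemma~\ref{stc_lemma} (via Lemma~\ref{W_lem}\eqref{W_lem2}) to get semi test configurations, apply Proposition~\ref{ding_flag_prop} at each level $r$ to obtain sub log canonicity with exponent $d_r$, prove $d_r\to d_\infty$ via the sandwich argument of Lemma~\ref{limit_lem} (auxiliary filtration $\sG$, Claim~\ref{G_claim}, BC/LM asymptotics, dominated convergence), and then pass to the limit using Lemma~\ref{gr_lc_lem}\eqref{gr_lc_lem1}. The structure, the key lemmas, and the order of operations all match the paper's argument.
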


\subsection{Ding semistability along subschemes}\label{ding_sub_section}

\begin{thm}\label{ding_thm}
Let $X$ be an $n$-dimensional $\Q$-Fano variety. 
Assume that $X$ is Ding semistable. 
Take any nonempty proper closed subscheme $\emptyset\neq Z\subsetneq X$ 
corresponds to an ideal sheaf $0\neq I_Z\subsetneq\sO_X$. 
Let $\sigma\colon\hat{X}\to X$ be the blowup along $Z$, let $F\subset\hat{X}$ be 
the Cartier divisor defined by the equation $\sO_{\hat{X}}(-F)=I_Z\cdot\sO_{\hat{X}}$. 
Then we have $\beta(Z)\geq 0$, where 
\[
\beta(Z):=\lct(X;I_Z)\cdot((-K_X)^{\cdot n})-
\int_0^\infty\vol_{\hat{X}}\left(\sigma^*(-K_X)-xF\right)dx.
\]
\end{thm}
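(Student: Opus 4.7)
The plan is to apply Theorem \ref{filt_thm} to the natural $\R$-filtration of anti-pluricanonical sections defined by order of vanishing along $F$, and then extract the desired inequality by testing the resulting sub log canonical pair on $X\times\A^1$ against a quasimonomial valuation associated with any divisor $E$ over $X$ with $\ord_E(I_Z)>0$.

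First, I fix $r_0\in\Z_{>0}$ with $-r_0K_X$ Cartier, set $L:=\sO_X(-r_0K_X)$ and $V_r:=H^0(X,L^{\otimes r})$, and define
\[
\sF^xV_r:=H^0\bigl(\hat{X},\sigma^*L^{\otimes r}\otimes\sO_{\hat{X}}(-\lceil x\rceil F)\bigr)\subseteq V_r.
\]
This filtration is decreasing, left-continuous, multiplicative (since $\lceil x\rceil+\lceil x'\rceil\geq\lceil x+x'\rceil$), and linearly bounded with $e_{\min}(V_\bullet,\sF)=0$ and $e_{\max}(V_\bullet,\sF)=r_0\tau_Z$. Sections of $\sF^xV_r$ descend to sections of $L^{\otimes r}\cdot\sigma_*\sO_{\hat{X}}(-\lceil x\rceil F)$, so $I_{(r,j)}^\sF\subseteq\sigma_*\sO_{\hat{X}}(-jF)$; combined with Proposition \ref{saturation_prop}\eqref{saturation_prop5} this shows that $\sF$ is already saturated, and homogeneity of the volume gives
\[
\vol(\bar{\sF}V_\bullet^x)=\vol_{\hat{X}}(\sigma^*L-xF)=r_0^n\cdot\vol_{\hat{X}}\bigl(\sigma^*(-K_X)-(x/r_0)F\bigr).
\]

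With this data, I apply Theorem \ref{filt_thm} with $e_-=0$ and an integer $e_+>r_0\tau_Z$. After the substitution $y=x/r_0$ and the observation that $\vol_{\hat{X}}(\sigma^*(-K_X)-yF)$ vanishes for $y>\tau_Z$, the theorem yields that $(X\times\A^1,\sI_\bullet^{\cdot(1/r_0)}\cdot(t)^{\cdot d_\infty})$ is sub log canonical, with
\[
d_\infty=1-\frac{e_+}{r_0}+\frac{1}{((-K_X)^{\cdot n})}\int_0^{\infty}\vol_{\hat{X}}\bigl(\sigma^*(-K_X)-yF\bigr)dy.
\]
Next, for any prime divisor $E$ extracted by a proper birational morphism $\mu\colon Y\to X$ with $Y$ smooth and $\alpha:=\ord_E(I_Z)>0$, the divisors $E\times\A^1$ and $Y\times\{0\}$ on $Y\times\A^1$ form an SNC pair; I consider the quasimonomial valuation $v_{a,b}$ along them with weights $(a,b)\in\R_{>0}^2$. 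Flat base change and adjunction give $A_{X\times\A^1}(v_{a,b})=aA_X(E)+b$ and $v_{a,b}((t))=b$, while $I_{(r,j)}^\sF\subseteq\sigma_*\sO_{\hat{X}}(-jF)$ together with the identity $\ord_E(\sigma_*\sO_{\hat{X}}(-jF))=j\alpha$ implies $v_{a,b}(\sI_r)\geq re_+\min(a\alpha,b)$, hence
\[
\liminf_{r\to\infty}\frac{v_{a,b}(\sI_r)}{r}\geq e_+\min(a\alpha,b).
\]

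Substituting $(a,b)=(1,\alpha)$ into the sub log canonical inequality for $v_{a,b}$ then reads $A_X(E)+\alpha\geq(e_+/r_0)\alpha+d_\infty\alpha$, which rearranges to
\[
\frac{A_X(E)}{\ord_E(I_Z)}\geq\frac{e_+}{r_0}+d_\infty-1=\frac{1}{((-K_X)^{\cdot n})}\int_0^\infty\vol_{\hat{X}}\bigl(\sigma^*(-K_X)-yF\bigr)dy.
\]
Taking the infimum over $E$ yields $\lct(X;I_Z)\cdot((-K_X)^{\cdot n})\geq\int_0^\infty\vol_{\hat{X}}(\sigma^*(-K_X)-yF)dy$, which is exactly $\beta(Z)\geq 0$. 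The main obstacle I anticipate is the careful verification that $\sF$ is saturated together with the two identities $\vol(\sF V_\bullet^x)=\vol_{\hat{X}}(\sigma^*L-xF)$ and $\ord_E(\sigma_*\sO_{\hat{X}}(-jF))=j\ord_E(I_Z)$ (the latter resting on $I_Z^j\subseteq\sigma_*\sO_{\hat{X}}(-jF)\subseteq\overline{I_Z^j}$ and the fact that divisorial valuations agree on an ideal and its integral closure); once those are in place, the final quasimonomial computation is direct.
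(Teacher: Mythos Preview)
Your argument is essentially correct and reaches the same conclusion as the paper, but the final extraction of the inequality is genuinely different. One small technicality: Theorem~\ref{filt_thm} is stated under the standing hypothesis $e_-<e_{\min}(V_\bullet,\sF)$, and here $e_{\min}=0$, so you should take $e_-\in\Z_{<0}$ rather than $e_-=0$; this only multiplies $\sI_r$ by a power of $(t)$ and shifts $d_\infty$ accordingly, leaving your final inequality unchanged.

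The substantive difference is in the last step. The paper first uses the crude inclusion $\sI_r\subset(I_Z+(t))^{re_+}$ together with Lemma~\ref{gr_lc_lem}\,\eqref{gr_lc_lem2} to replace the graded family by a single ideal, and then invokes Takagi's summation/restriction formula for multiplier ideals to unwind the condition that $(X\times\A^1,(I_Z+(t))^{\cdot\tau}\cdot(t)^{\cdot d_\infty})$ is sub log canonical into the inequality $\lct(X;I_Z)\geq S$. You instead test the sub log canonical pair $(X\times\A^1,\sI_\bullet^{\cdot(1/r_0)}\cdot(t)^{\cdot d_\infty})$ directly against the single divisorial valuation $v_{1,\alpha}$ (the exceptional divisor of the $(1,\alpha)$-weighted blowup along $E\times\A^1$ and $Y\times\{0\}$), using only $I^\sF_{(r,j)}\subset\overline{I_Z^j}$ and the elementary fact that divisorial valuations are insensitive to integral closure. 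Your route is more elementary---no multiplier ideals, no \cite{T}---and anticipates the now-standard valuative viewpoint on K-stability; the paper's route, on the other hand, passes through an intermediate statement about $(I_Z+(t))$ that may be of independent use. A further minor variant: you filter by $\sigma_*\sO_{\hat{X}}(-jF)=\overline{I_Z^j}$ whereas the paper filters by $I_Z^j$; both filtrations are saturated and, since $\ord_E(I_Z^j)=\ord_E(\overline{I_Z^j})$ for every divisorial valuation, they produce the same volumes and the same graded ideals $\sI_\bullet$ up to integral closure, so either choice works.
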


\begin{proof}
Fix $r_0\in\Z_{>0}$ with $-r_0K_X$ Cartier and set $L:=\sO_X(-r_0K_X)$. 
Let $V_\bullet$ be the complete graded linear series of $L$. Consider the 
$\R$-filtration $\sF$ of $V_\bullet$ defined by 
\[
\sF^x V_r:=\begin{cases}
H^0(X, L^{\otimes r}\cdot I_Z^{\lceil x\rceil}) & \text{if }x\in\R_{\geq 0},\\
V_r & \text{otherwise}.
\end{cases}
\]
Then $\sF$ is a decreasing, left-continuous, multiplicative and linearly bounded 
$\R$-filtration of $V_\bullet$. In fact, we can immediately check that 
$e_{\min}(V_\bullet, \sF)=0$ and $e_{\max}(V_\bullet, \sF)=r_0\tau_Z$. 
We note that the filtration $\sF$ is saturated. Indeed, the homomorphism
\[
\sF^x V_r\otimes_\C L^{\otimes (-r)}\twoheadrightarrow I_{(r, x)}
\]
induces the inclusion $I_{(r, x)}\subset I_Z^{\lceil x\rceil}$ for any $x\in\R_{\geq 0}$. 
Thus $\bar{\sF}^x V_r=H^0(X, L^{\otimes r}\cdot I_{(r, x)})\subset\sF^x V_r$. 

Fix $e_+$, $e_-\in\Z$ with $e_+>r_0\tau_Z$ and $e_-<0$. 
By Theorem \ref{filt_thm}, the pair 
$(X\times\A^1, \sI_\bullet^{\cdot (1/r_0)}\cdot(t)^{\cdot d_\infty})$ is 
sub log canonical, where 
\begin{eqnarray*}
\sI_r&=&I_{(r, re_+)}+I_{(r, re_+-1)}t^1+\cdots+
I_{(r, re_-+1)}t^{r(e_+-e_-)-1}+(t^{r(e_+-e_-)}), \\
d_\infty&=&1-\frac{e_+-e_-}{r_0}+\frac{1}{r_0^{n+1}((-K_X)^{\cdot n})}\int_{e_-}^{e_+}
\vol(\sF V_\bullet^x)dx.
\end{eqnarray*}
Note that 
\[
\vol(\sF V_\bullet^x)=\begin{cases}
r_0^n\vol_{\hat{X}}(\sigma^*(-K_X)-(x/r_0)F) & \text{if }x\in\R_{\geq 0}, \\
r_0^n((-K_X)^{\cdot n}) & \text{otherwise}.
\end{cases}
\]
Thus 
$d_\infty=1-\tau+S$
holds, where $\tau:=e_+/r_0$ and 
\[S:=\frac{1}{((-K_X)^{\cdot n})}\int_0^\infty\vol_{\hat{X}}(\sigma^*(-K_X)
-xF)dx.
\] 
Moreover, for any $r\gg 0$, 
\begin{eqnarray*}
\sI_r\subset I_Z^{re_+}+I_Z^{re_+-1}t^1+\cdots+I_Z^1t^{re_+-1}+(t^{re_+})
=(I_Z+(t))^{re_+}.
\end{eqnarray*}
By Lemma \ref{gr_lc_lem} \eqref{gr_lc_lem2}, the pair 
$(X\times\A^1, (I_Z+(t))^{\cdot \tau}\cdot (t)^{\cdot d_\infty})$ is sub log canonical. 

Let $\theta\colon\sY\to X\times\A^1$ be a common log resolution of $X\times\A^1$, 
$I_Z+(t)$ and $(t)$, that is, $\sY$ is smooth, $(I_Z+(t))\cdot\sO_\sY=:\sO_\sY(-F_1)$, 
$(t)\cdot\sO_\sY=:\sO_\sY(-F_2)$ satisfy that $\Exc(\theta)$, $\Exc(\theta)+F_1+F_2$ 
are divisors with simple normal crossing supports. 
For any $c_1$, $c_2\in\R$, we set 
\begin{eqnarray*}
&&\sJ\left(X\times\A^1, (I_Z+(t))^{\cdot c_1}\cdot(t)^{\cdot c_2}\right)\\
&:=&\theta_*\sO_\sY\left(\lceil K_\sY-\theta^*K_{X\times\A^1}
-c_1F_1-c_2F_2\rceil\right), 
\end{eqnarray*}
where $\lceil K_\sY-\theta^*K_{X\times\A^1}-c_1F_1-c_2F_2\rceil$ 
is the smallest $\Z$-divisor which contains 
$K_\sY-\theta^*K_{X\times\A^1}-c_1F_1-c_2F_2$.
If $c_1$, $c_2\in\R_{\geq 0}$, then this is nothing but the multiplier ideal sheaf of 
the pair $(X\times\A^1, (I_Z+(t))^{\cdot c_1}\cdot (t)^{\cdot c_2})$ 
(see \cite[\S 9]{L2} or \cite{T}).
Take any $0<\varepsilon\ll 1$. Then we have 
\[
\sO_{X\times\A^1}\subset
\sJ\left(X\times\A^1, (I_Z+(t))^{\cdot (1-\varepsilon)\tau}\cdot
(t)^{\cdot (1-\varepsilon)d_\infty}\right) 
\]
since $X\times\A^1$ is log terminal. 
Pick any positive integer $N$ with $(1-\varepsilon)d_\infty+N>0$. 
By the definition of 
$\sJ\left(X\times\A^1, (I_Z+(t))^{\cdot c_1}\cdot(t)^{\cdot c_2}\right)$, we have 
\[
(t^N)\subset
\sJ\left(X\times\A^1, (I_Z+(t))^{\cdot (1-\varepsilon)\tau}\cdot
(t)^{\cdot (1-\varepsilon)d_\infty+N}\right)\subset\sO_{X\times\A^1}. 
\]
By \cite[Theorem 3.2]{T} and \cite[Remark 9.5.23]{L2}, we have
\begin{eqnarray*}
&&\sJ\left(X\times\A^1, (I_Z+(t))^{\cdot (1-\varepsilon)\tau}\cdot
(t)^{\cdot (1-\varepsilon)d_\infty+N}\right)\\
&=&\sum_{0\leq\tau'\leq(1-\varepsilon)\tau}
\sJ\left(X\times\A^1, I_Z^{\cdot \tau'}\cdot
(t)^{\cdot (1-\varepsilon)(d_\infty+\tau)-\tau'+N}\right)\\
&=&\sum_{0\leq\tau'\leq(1-\varepsilon)\tau}
\sJ(X, I_Z^{\cdot \tau'})\cdot
\left(t^{\lfloor(1-\varepsilon)(d_\infty+\tau)-\tau'\rfloor+N}\right),
\end{eqnarray*}
where $\sJ(X, I_Z^{\cdot \tau'})$ is the multiplier ideal sheaf of the pair 
$(X, I_Z^{\cdot \tau'})$.
This implies that 
\[
\sO_X=\sum_{\tau'>S-\varepsilon(1+S)}\sJ(X, I_Z^{\cdot \tau'})
\]
since $(1-\varepsilon)(d_\infty+\tau)-1=S-\varepsilon(1+S)$. 
Therefore we get the inequality $\lct(X; I_Z)\geq S$. 
\end{proof}

\begin{remark}\label{eta_beta_rmk}
Assume that $X$ is smooth. If 
$Z$ is a reduced divisor with $(X, Z)$ log canonical 
(resp.\ $Z$ is a smooth subvariety with \cite[Assumption 3.1]{fjt2}), 
then the value $\beta(Z)$ 
is equal to the value 
$\eta(Z)$ in \cite[Definition 1.1]{fjt1} (resp.\ in \cite[Remark 3.10]{fjt2}). 
\end{remark}

\section{Proofs}\label{proof_section}

\begin{thm}\label{Pn_thm}
Let $X$ be an $n$-dimensional $\Q$-Fano variety which is Ding semistable. 
Then we have $((-K_X)^{\cdot n})\leq (n+1)^n$. Moreover, if we further assume that 
$X$ is smooth and $((-K_X)^{\cdot n})=(n+1)^n$, then $X$ is isomorphic to the 
projective space $\pr^n$. 
\end{thm}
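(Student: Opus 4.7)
The plan is to apply Theorem \ref{ding_thm} to $Z=\{p\}$ for any smooth closed point $p\in X^{\SM}$ (such a point exists since $X$ is log terminal, hence normal), and to combine the resulting Ding inequality with Theorem \ref{sesh_thm}\eqref{sesh_thm1}. First I would compute $\lct(X;I_p)=n$: this is a local statement on a smooth neighborhood of $p$, and the blowup $\sigma\colon\hat{X}\to X$ at $p$ has exceptional divisor $F\simeq\pr^{n-1}$ with $K_{\hat X/X}=(n-1)F$ and $\ord_F(I_p)=1$, so that $(X,cI_p)$ is sub log canonical exactly for $c\leq n$. Writing $V:=((-K_X)^{\cdot n})$, Theorem \ref{ding_thm} then reads
\[
nV \;\geq\; \int_0^\infty \vol_{\hat X}\bigl(\sigma^*(-K_X)-xF\bigr)\,dx.
\]

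Next I would plug in the bound $\vol_{\hat X}(\sigma^*(-K_X)-xF)\geq V-x^n$ from Theorem \ref{sesh_thm}\eqref{sesh_thm1}. Its right-hand side is nonnegative precisely on $[0,V^{1/n}]$, so a one-line integration yields
\[
\int_0^\infty \vol_{\hat X}\bigl(\sigma^*(-K_X)-xF\bigr)\,dx \;\geq\; \int_0^{V^{1/n}}(V-x^n)\,dx \;=\; \frac{n}{n+1}\,V^{(n+1)/n}.
\]
Combining the two displays gives $nV\geq \tfrac{n}{n+1}V^{(n+1)/n}$, equivalently $V\leq (n+1)^n$, which is the first assertion.

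For the rigidity statement, assume $X$ is smooth and $V=(n+1)^n$. Then each inequality above must be an equality for every $p\in X$, which forces $\vol_{\hat X}(\sigma^*(-K_X)-xF)=V-x^n$ for almost every $x\in[0,n+1]$, and therefore for every such $x$ by continuity of the volume function. By Theorem \ref{sesh_thm}\eqref{sesh_thm2}, this gives $\varepsilon_p(-K_X)\geq n+1$ at every $p\in X$, and I would conclude by invoking the characterization of projective space due to Cho--Miyaoka--Shepherd-Barron \cite{CMSB} as sharpened by Kebekus \cite{kebekus}: a smooth Fano $n$-fold at which the anticanonical Seshadri constant is at least $n+1$ is isomorphic to $\pr^n$. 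I expect the subtle step to be this last equality analysis --- converting an integral identity into pointwise equality of volumes on the full interval $[0,n+1]$ and then into the Seshadri bound --- rather than the volume inequality $V\leq(n+1)^n$ itself, which falls out of the two main inputs in a few lines.
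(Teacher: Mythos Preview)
Your proposal is correct and follows essentially the same route as the paper: apply Theorem~\ref{ding_thm} at a smooth point (using $\lct(X;I_p)=n$), bound the integral from below via Theorem~\ref{sesh_thm}\eqref{sesh_thm1}, and in the equality case read off $\varepsilon_p=n+1$ from Theorem~\ref{sesh_thm}\eqref{sesh_thm2} to invoke \cite{CMSB,kebekus}. The only small omission is that Theorem~\ref{sesh_thm} is stated for $n\geq 2$, so you should dispose of the trivial case $n=1$ separately (a $\Q$-Fano curve is $\pr^1$).
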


\begin{proof}
We can assume that $n\geq 2$. 
Take any smooth closed point $p\in X$. Let $\sigma\colon\hat{X}\to X$ be the 
blowup along $p$ and let $F$ be the exceptional divisor of $\sigma$. By 
Theorem \ref{ding_thm}, we have 
\[
n\cdot((-K_X)^{\cdot n})\geq \int_0^\infty\vol_{\hat{X}}(\sigma^*(-K_X)-xF)dx.
\]
On the other hand, by Theorem \ref{sesh_thm} \eqref{sesh_thm1}, we have 
\begin{eqnarray*}
\int_0^\infty\vol_{\hat{X}}(\sigma^*(-K_X)-xF)dx&\geq&
\int_0^{\sqrt[n]{((-K_X)^{\cdot n})}}(((-K_X)^{\cdot n})-x^n)dx\\
&=&\sqrt[n]{((-K_X)^{\cdot n})}\cdot\frac{n}{n+1}((-K_X)^{\cdot n}). 
\end{eqnarray*}
Hence we get the inequality $(n+1)^n\geq ((-K_X)^{\cdot n})$. 
Assume that $(n+1)^n=((-K_X)^{\cdot n})$. 
Then 
\[
\vol_{\hat{X}}(\sigma^*(-K_X)-xF)=(n+1)^n-x^n
\]
for all $x\in[0, n+1]$. Thus, by Theorem \ref{sesh_thm} \eqref{sesh_thm2}, 
we have $\varepsilon_p=n+1$. If $X$ is smooth, this implies that $X\simeq\pr^n$ 
by \cite{CMSB} and \cite{kebekus} (see also \cite{BS}). 
\end{proof}

\begin{proof}[Proof of Theorem \ref{mainthm}]
This is an immediate consequence of Theorems \ref{berman_thm} and \ref{Pn_thm}. 
\end{proof}

\end{document}